\documentclass[11pt, reqno]{amsart}

\usepackage{amsmath}
\usepackage{amssymb}
\usepackage{amsthm}
\usepackage{mathrsfs}
\usepackage{mathtools}
\usepackage{bm}
\usepackage{bbm}
\usepackage{dsfont}
\usepackage{graphicx}
\usepackage{enumitem}
\usepackage{comment}
\usepackage{mleftright}
\usepackage{color}
\usepackage{iftex}

\ifuptex
	\usepackage[dvipdfmx, hypertexnames=false, hidelinks]{hyperref}
	\hypersetup{colorlinks=true,citecolor=blue,linkcolor=blue,urlcolor=blue,
		pdfstartview=FitH }
\else
\ifpdftex
\usepackage
	{hyperref}
	\hypersetup{colorlinks=true,citecolor=blue,linkcolor=blue,urlcolor=blue,
		pdfstartview=FitH }
\fi\fi

\usepackage{cleveref}

\usepackage{autonum}

\allowdisplaybreaks[3]
\theoremstyle{plain} 
\newtheorem{theorem}{Theorem}
\newtheorem{lemma}{Lemma}[section]

\newtheorem{proposition}{Proposition}

\newtheorem*{conjecture*}{Conjecture}
\newtheorem*{theorem*}{Theorem}
\newtheorem*{question*}{Question}

\theoremstyle{plain}

\theoremstyle{remark}
\newtheorem{remark}{Remark}

\theoremstyle{definition}
\newtheorem*{assumption*}{Assumption}

\newtheorem*{notations*}{Notations}
\newtheorem*{acknowledgment*}{Acknowledgments}

\numberwithin{equation}{section}

\crefformat{enumi}{\textnormal{(\textcolor{blue}{#1})}}

\crefname{section}{Section}{Sections}

\crefname{theorem}{Theorem}{Theorems}
\crefname{corollary}{Corollary}{Corollaries}
\crefname{lemma}{Lemma}{Lemmas}
\crefname{proposition}{Proposition}{Propositions}
\crefname{claim}{Claim}{Claims}
\crefname{definition}{Definition}{Definitions}
\crefname{notation}{Notation}{Notations}
\crefname{problem}{Problem}{Problems}
\crefname{note}{Note}{Notes}
\crefname{remark}{Remark}{Remarks}
\crefname{example}{Example}{Examples}
\crefname{equation}{}{}

\crefname{enumi}{}{}
\crefname{enumii}{}{}
\crefname{enumiii}{}{}

\newcommand\swapcommand[2]{%
\let\swaptemp#1
\let#1#2
\let#2\swaptemp
}

\let\sl\l
\DeclareRobustCommand{\l}{%
	\ifmmode
		\mleft
	\else
		\sl
	\fi
}

\let\sL\L
\DeclareRobustCommand{\L}{%
	\ifmmode
	\mathscr{L}
	\else
		\sL
	\fi
}

\let\rmi\i
\DeclareRobustCommand{\i}{%
	\ifmmode
	\mathrm{i}
	\else
		\rmi
	\fi
}

\NewDocumentCommand{\qquadn}{m}{\qquad\ifnum#1>1\qquadn{\numexpr#1-1\relax}\fi}

\NewDocumentCommand{\quadn}{m}{\quad\ifnum#1>1\quadn{\numexpr#1-1\relax}\fi}

\swapcommand{\SS}{\S}
\renewcommand{\S}{\mathscr{S}}

\newcommand\set[2]{%
	\left\{ #1:  #2 \right\}
}

\renewcommand{\vector}[1]{\begin{pmatirx} #1 \end{pmatrix}}

\newcommand{\CC}{\mathbb{C}}
\newcommand{\RR}{\mathbb{R}}

\newcommand{\ZZ}{\mathbb{Z}}

\newcommand{\s}{\sigma}

\newcommand{\lam}{\lambda}
\newcommand{\Lam}{\Lambda}

\newcommand{\ceq}{\coloneqq}

\newcommand{\sd}{\, \mathrm{d}} 

\newcommand{\ds}{\displaystyle}

\renewcommand{\a}{\alpha}
\renewcommand{\b}{\beta}

\renewcommand{\r}{\mright}

\renewcommand{\Re}{\operatorname{Re}}
\renewcommand{\Im}{\operatorname{Im}}
\renewcommand{\epsilon}{\varepsilon}

\renewcommand{\hat}{\widehat}
\renewcommand{\bar}{\overline}
\renewcommand{\sL}{\mathscr{L}}
\let\psetminus\setminus
\renewcommand{\setminus}{\mathbin{\psetminus}}

\title{Small gaps between consecutive zeros of the Riemann zeta-function}
\author[S. Inoue]{Sh\={o}ta Inoue}

\address[S. Inoue]{Department of Liberal Arts and Basic Sciences, College of Industrial Technology, Nihon University, 2-11-1 Shin-ei, Narashino, Chiba 275-8576, Japan}
\email{inoue.shota@nihon-u.ac.jp}

\keywords{Riemann zeta-function, gaps between zeros, resonance method, pair correlation of zeros}

\subjclass{Primary 11M26; Secondary 11M06}

\begin{document}

\begin{abstract}
	In this paper, we introduce the \textit{resonance-correlation method} to study small gaps between consecutive zeros of the Riemann zeta-function.
	Our method is based on a synthesis of Montgomery's pair correlation approach and the Montgomery--Odlyzko method.
	As an application, we break the persistent practical barrier around $0.515$ and prove $\mu < 0.50895$ under the Riemann Hypothesis.
\end{abstract}

\maketitle

\section{\textbf{Introduction}}
	The distribution of gaps between consecutive zeros of the Riemann zeta-function $\zeta$ is a central subject in analytic number theory.
	Let $0 < \gamma_{1} \leq \gamma_{2} \leq \cdots \leq \gamma_{n} \leq \cdots$ denote the sequence of ordinates of the nontrivial zeros of $\zeta$ in the upper half-plane.
	By the Riemann--von Mangoldt formula, the average spacing between consecutive zeros $\gamma_{n}, \gamma_{n + 1}$ is asymptotic to $2\pi / \log \gamma_{n}$.
	The extreme variations of the gaps between zeros are captured by the quantities
	\begin{align}
		\lam = \limsup_{n \rightarrow + \infty}\frac{\gamma_{n + 1} - \gamma_{n}}{2\pi / \log\gamma_{n}},
		\quad
		\mu = \liminf_{n \rightarrow + \infty}\frac{\gamma_{n + 1} - \gamma_{n}}{2\pi / \log\gamma_{n}}.
	\end{align}
	The trivial bound is $\mu \leq 1 \leq \lam$.

	Assuming the Riemann Hypothesis (RH), Montgomery \cite{Mo1973} established the framework of the pair correlation of zeros and showed $\mu \leq 0.68$.
	In the same paper, he also proposed a conjecture called the Pair Correlation Conjecture (PCC).
	Based on a connection with random matrix theory observed jointly with Dyson, Montgomery further conjectured that $\lam = \infty$ and $\mu = 0$.
	Toward understanding $\mu$ and $\lam$, Montgomery and Odlyzko \cite{MO1984} established a method, which led to $\mu < 0.5179$ and $\lam > 1.9799$ under RH.
	For the large gaps $\lam$, significant breakthroughs have since been achieved under RH, both through the development of alternative approaches (e.g., Hall \cite{Ha2005}) and by eventually breaking the barrier of the Montgomery--Odlyzko method (e.g., Bui and Milinovich \cite{BM2017}, see also \cite{IKT2025}).

	In contrast, progress on the small gaps $\mu$ has been gradual over the past four decades.
	While the Montgomery--Odlyzko method has served as the foundation for studying $\mu$, refinements in the choice of weight functions by various authors have gradually lowered the bound.
	Notably, under RH, the record progressed from $\mu < 0.5172$ by Conrey, Ghosh, and Gonek \cite{CGG1984} to $\mu < 0.5155$ by Bui, Milinovich, and Ng \cite{BMN2010},
	and $\mu < 0.515398$ by Feng and Wu \cite{FW2012}, culminating in the current best conditional result $\mu < 0.515396$ by Preobrazhenski\u{\i} \cite{Pr2016}.
	These studies suggest that optimizing weight functions within the Montgomery--Odlyzko method has essentially reached a practical barrier around $0.515$.
	Indeed, while the absolute theoretical limit of the Montgomery--Odlyzko method is rigorously established as $0.508$ in \cite{IKT2025},
	the historical rate of improvement suggests that further refinements of weight functions alone are unlikely to surpass $0.515$ significantly.

	In this paper, we introduce an approach that we call the \textit{resonance-correlation method}.
	For a short interval length $h > 0$, let $N_{h}(t)$ denote the number of zeros of the Riemann zeta-function in the interval $[t - h / 2, t + h / 2]$, which will be defined rigorously in the next section.
	While the Montgomery--Odlyzko method relies on evaluating the linear quantity $N_{h}(t) - 1$,
	our method synthesizes this framework with Montgomery's pair correlation by shifting the focus to the quadratic quantity $N_{h}(t)^{2} - N_{h}(t)$.
	To evaluate this nonlinear quantity effectively, we introduce a tool that we call an \textit{approximator}.
	By constructing a suitable combination of a resonator and this approximator,
	we circumvent the limitations of previous approaches.
	As an application, we break the persistent practical barrier around $0.515$ and obtain the following theorem.

	\begin{theorem}	\label{SGZuRH}
		Under the Riemann Hypothesis, we have $\mu < 0.50895$.
	\end{theorem}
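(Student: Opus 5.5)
The plan is to argue by contradiction. Suppose $\mu \ge \beta$ with $\beta = 0.50895$, and set $h = 2\pi\beta/\log T$. Then for large $T$ every window $[t-h/2,t+h/2]$ with $t \in [T,2T]$ contains at most one zero, counted without multiplicity; a double zero would itself contradict $\mu>0$. Hence $N_h(t)\in\{0,1\}$, so $N_h(t)^2 - N_h(t) = 0$ identically on $[T,2T]$. For any nonnegative weight $|R(t)|^2$ we would therefore have
\begin{align}
	\int_{T}^{2T} \bigl(N_{h}(t)^{2} - N_{h}(t)\bigr) |R(t)|^{2} \sd t = 0 .
\end{align}
The theorem will follow once we exhibit a Dirichlet polynomial $R$ (the resonator), of length $T^{\theta}$ with $\theta<1$, for which the asymptotic evaluation of this integral is strictly positive at $\beta = 0.50895$.

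To evaluate the integral I will smooth the indicator of $[-h/2,h/2]$ from below by a suitable kernel. Positivity is not destroyed by this: we only need a quantity that is forced to vanish, or to be $\le 0$, under $\mu \ge \beta$. Expanding $N_h^2$ as a double sum over pairs of zeros $\gamma,\gamma'$, I then need weighted pair-correlation sums of the form $\sum_{\gamma,\gamma'} w(\gamma-\gamma') |R(\gamma)|^2$. Here I introduce the approximator. Write $N_h(t)$, via the explicit formula under RH, as a main term $\frac{h}{2\pi}\log T$ plus a Dirichlet polynomial $A(t)=\sum_{p\le T^{\vartheta}} a_p p^{-1/2-it}$ plus a small remainder. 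The pair-correlation structure then reduces to mean values of $|A(t)|^2|R(t)|^2$ and $A(t)|R(t)|^2$. These can be computed by the standard mean value theorem for Dirichlet polynomials, together with the Montgomery--Odlyzko/Conrey--Ghosh--Gonek evaluation of $\sum_{\gamma} |R(\gamma)|^2$ and $\sum_\gamma A(\gamma)|R(\gamma)|^2$ via Gonek's lemma, valid as long as the total length stays below $T$.

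Concretely, I will take $R(t)=\sum_{n\le T^{\theta}} r(n)\, n^{-it}$ with $r(n)=f(\log n/\log T)$ supported on squarefree or general $n$, as in Conrey--Ghosh--Gonek (possibly with a $d_r(n)$ twist). The main term then becomes an explicit quadratic functional in $f$, the kernel, and the approximator coefficients, and the claim reduces to finding admissible parameters making it positive for $\beta = 0.50895$. I would parametrize $f$ by polynomials and the kernel by Fejér/Selberg-type functions, and optimize numerically, verifying the final inequality rigorously with interval arithmetic.

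The main obstacle is the mean value of the cross term involving $|A|^2|R|^2$ summed over zeros, i.e.\ a genuinely quadratic (pair-correlation) sum twisted by the resonator. This requires controlling off-diagonal terms in $\sum_\gamma A(\gamma)\overline{A(\gamma)}|R(\gamma)|^2$ with combined length $T^{2\vartheta+2\theta}$. I would first try to keep $2\vartheta+2\theta<1$ so that Gonek's lemma applies with only diagonal main terms, and accept a smaller range in exchange for rigor. The second difficulty is purely numerical: confirming that the optimized functional is indeed positive beyond $0.515$ down to $0.50895$.
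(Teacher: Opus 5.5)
Your top-level reduction is sound and is the paper's own starting point. If every window of length $h$ contains at most one zero, then $N_h^2-N_h\le 0$ on $[T,2T]$; here $h$ must be taken slightly below $2\pi\beta/\log T$ to absorb the $\epsilon$ in the $\liminf$ and the gap between $\log T$ and $\log 2T$. So it suffices to make a $|R|^2$-weighted average of $N_h^2-N_h$ positive. This is exactly the paper's comparison of $M_2$ with $M_1$, carried out with $N_h$ itself via the Riemann--von Mangoldt formula, so no smoothing of the indicator is needed.

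The argument breaks at the quadratic term. You write $N_h(t)=\frac{h}{2\pi}\log T+A(t)+E(t)$, with $A$ a prime polynomial of length $T^{\vartheta}$ and $E$ a ``small remainder'', and reduce everything to mean values of $|A|^2|R|^2$ and $A|R|^2$. But under RH, $N_h(t)-\frac{h}{2\pi}\log T=S(t+\frac h2)-S(t-\frac h2)+O(h+T^{-1})$. For $h\asymp 1/\log T$, the part of this difference not captured by a polynomial of length $T^{\vartheta}$, $\vartheta<1$, has mean square bounded below by a positive constant: the primes in $(T^{\vartheta},T]$ alone contribute about $\frac{2}{\pi^2}\int_{\vartheta}^{1}\frac{\sin^2(\pi\phi u)}{u}\,du$. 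That is the same size as what you are comparing, e.g. $\phi(1-\phi)\approx\frac14$. Beyond that, $E$ carries the resonator-twisted pair correlation past the diagonal range, which is exactly the PCC-type information nobody can compute. Equivalently, the sums $\sum_{\gamma,\gamma'}w(\gamma-\gamma')|R(\gamma)|^2$ you invoke have no known asymptotic for kernels like the one attached to $N_h^2$. Since you need a strict inequality, you cannot simply discard the terms involving $E$. Moreover, the cross term $\int AE|R|^2$ requires integrating the full $S$-difference against a Dirichlet polynomial, which is neither a Dirichlet-polynomial mean value nor a Gonek-lemma sum.

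The missing idea is to abandon an asymptotic for the quadratic term and prove only a one-sided bound in which every surviving term is computable. The paper writes $(S(t+\frac h2)-S(t-\frac h2))^2=\frac{1}{2\pi^2}(|z|^2-\Re(z^2))$ with $z=\log\zeta(\frac12+i(t+\frac h2))-\log\zeta(\frac12+i(t-\frac h2))$. It evaluates the $\Re(z^2)$ part exactly by shifting to $\Re s=2$ (\cref{lem_STF_Sv}). It bounds the $|z|^2$ part from below by expanding $\int|\sum_{n\le L}(z(t)-A_{L/n}(\frac12+it))r(n)n^{-1/2-it}|^2\Phi_T(t)\,dt\ge 0$. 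What survives is the pure Dirichlet-polynomial mean value $\mathcal{I}_1$ and the cross term $\mathcal{I}_2$. In $\mathcal{I}_2$ the full $\log\zeta$ difference, not a truncation, is integrated against $\overline{A_{L/n}}$, and this is computed exactly by the Selberg--Tsang contour shift (\cref{lem_STF}), the zero contributions vanishing under RH. In your notation the analogue would be $(A+E)^2\ge 2(A+E)A-A^2$, with $\int(A+E)A|R|^2$ and $\int(A+E)|R|^2$ evaluated exactly; with $a=g_h$ the bound for the $|z|^2$ part is just $\mathcal{I}_1$.

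The obstacle you single out does not arise. The paper never sums over zeros or uses Gonek's lemma. Truncating the approximator at $L/n$ makes $\sum_n A_{L/n}(s)r(n)n^{-s}$ a polynomial of total length at most $L\le T/(\log T)^2$, so all main terms are diagonal and one gets the full triangle $u+v\le 1$ in $\mathcal{M}_{\ell,f}$. Your fallback $2\vartheta+2\theta<1$ would roughly halve the ranges of all the sine kernels. Given the final margin of only about $10^{-5}$ at $\phi=0.508949$, there is no reason to expect $0.50895$ to survive that.

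For the same reason, the Liouville factor in $r(n)=d_\ell(n)\lambda(n)f(\log n/\log L)$, which your resonator lacks, matters. For $\phi>\frac12$ it is what makes the linear term, with its small factor $|1-2\phi|$, contribute positively rather than negatively.
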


	Finally, we remark on the importance of the $0.5$ barrier, which is deeply related to the Alternative Hypothesis and the Siegel zero problem.
	This connection was initially suggested by Heath-Brown (unpublished work) and Conrey \cite{Co2003}, and was rigorously proved by Conrey and Iwaniec \cite{CI2002}.
	By breaking the $0.515$ bottleneck, this paper has implications for these well-known open problems.
	It remains an interesting open question whether this method, possibly combined with further assumptions such as the Generalized Riemann Hypothesis,
	can strictly break the structural barrier $0.508$ of the Montgomery--Odlyzko method.

\section{\textbf{Overview of the method}}	\label{Sec:APSG}
	Our method is built upon two key ingredients.
	We describe these while reviewing previous works.
	For $T \in \RR$, let $N(T)$ denote the number of zeros $\rho = \b + \i\gamma$ satisfying $0 < \gamma < |T|$ counted with multiplicity.
	If $T$ is equal to the imaginary part of a zero, then $N(T) = (N(T^{+}) + N(T^{-})) / 2$.
	At this point, we also fix the branch of the logarithm of the Riemann zeta-function.
	Let $\log\zeta(\s + \i t) = \int_{\infty}^{\s}\frac{\zeta'}{\zeta}(\a + \i t)\sd\a$ if $t$ is neither equal to $0$ nor the imaginary part of a zero of $\zeta$.
	If $t$ is equal to $0$ or the imaginary part of a zero, then $\log\zeta(\s + \i t) = (\log\zeta(\s + \i t^{+}) + \log\zeta(\s + \i t^{-})) / 2$.
	Let $N_{h}(t) = N(t + \tfrac{h}{2}) - N(t - \tfrac{h}{2})$ for $t \in \RR$ and $h > 0$, which is the principal object of this paper.
	The fundamental idea to estimate large or small gaps between zeros is to find a $t$ such that $N_{h}(t) < 1$ or $N_{h}(t) > 1$.
	In the estimation of $\mu$, Montgomery's pair correlation method \cite{Mo1973} can be essentially regarded as a method that compares the two integrals
	\begin{align}
		\int_{0}^{T}N_{h}(t) \sd t, \quad \int_{0}^{T}N_{h}(t)^{2}\sd t.
	\end{align}
	The latter integral is connected to the pair correlation through the works of Goldston \cite{Go1987} and Fujii \cite{Fu1990}.
	In fact, we can obtain $\mu \leq 0.68$ by combining Montgomery's theorem and Fujii's theorem, which coincides with Montgomery's original bound.
	While the bound for $\mu$ has been lowered to near $0.6$ under RH through the optimization of test functions within the pure pair correlation framework (see, e.g., \cite{CCLM2017, CGL2020, GGOS2000}),
	achieving an essential refinement, such as $\mu < 0.55$, remains difficult due to fundamental obstacles involving PCC.

	On the other hand, Montgomery and Odlyzko \cite{MO1984} considered the two integrals
	\begin{align}
		\int_{-\infty}^{\infty} N_{h}(t) |R_{L}(\tfrac{1}{2} + \i t)|^{2} \Phi_{T}(t)\sd t, \quad \int_{-\infty}^{\infty} |R_{L}(\tfrac{1}{2} + \i t)|^{2} \Phi_{T}(t)\sd t
	\end{align}
	to find a $t$ such that $N_{h}(t) < 1$ or $N_{h}(t) > 1$.
	Here, $\Phi_{T}$ is a smooth nonnegative-valued function, and $R_{L}$ is the resonator defined by a Dirichlet polynomial $R_{L}(s) = \sum_{n \leq L}r(n) n^{-s}$.
	This idea of utilizing a Dirichlet polynomial serves as a vital precursor to the resonance method.
	Soundararajan \cite{So2008} pioneered the resonance method in his seminal work, and Bondarenko and Seip \cite{BS2017} further advanced it.
	However, in the search for small gaps, evaluating the linear quantity $N_{h}(t) - 1$ within the Montgomery--Odlyzko method suffers from a significant loss of efficiency.
	For small $h$, $N_{h}(t)$ is zero for many $t$, which penalizes the integral by yielding negative contributions.

	Our first ingredient synthesizes the pair correlation approach with the Montgomery--Odlyzko method to eliminate such negative contributions by comparing the following two integrals:
	\begin{align}
		M_{1}(T, h; R_{L})
		= \int_{-\infty}^{\infty} N_{h}(t) |R_{L}(\tfrac{1}{2} + \i t)|^{2} \Phi_{T}(t) \sd t,
	\end{align}
	and
	\begin{align}
		M_{2}(T, h; R_{L})
		= \int_{-\infty}^{\infty} N_{h}(t)^{2} |R_{L}(\tfrac{1}{2} + \i t)|^{2} \Phi_{T}(t) \sd t.
	\end{align}
	For this reason, we call this method the resonance-correlation method.
	Then, our goal is to show that $M_{2}(T, h; R_{L}) > M_{1}(T, h; R_{L})$ for small $h$, which guarantees the existence of some $t$ satisfying $N_{h}(t) > 1$.

	In calculating $M_{2}$, a difficulty arises when evaluating the integral
	\begin{align}
		\int_{-\infty}^{\infty}| \log\zeta(\tfrac{1}{2} + \i(t + \tfrac{h}{2})) - \log\zeta(\tfrac{1}{2} + \i(t - \tfrac{h}{2})) |^{2}
		|R_{L}(\tfrac{1}{2} + \i t)|^{2} \Phi_{T}(t) \sd t,
	\end{align}
	which originates from the Riemann--von Mangoldt formula.
	Since obtaining an exact asymptotic formula for this oscillating non-linear term is fundamentally intractable,
	we establish a rigorous and effective lower bound instead.
	To achieve this, we introduce an approximator defined by the Dirichlet polynomial
	\begin{align}	\label{def_AX}
		A_{X}(s)
		= \sum_{n \leq X}\frac{a(n)}{n^{s}}.
	\end{align}
	Here, $a$ is a suitable arithmetic function.
	Using this, we establish the inequality
	\begin{align}
		&\int_{-\infty}^{\infty}| \log\zeta(\tfrac{1}{2} + \i(t + \tfrac{h}{2})) - \log\zeta(\tfrac{1}{2} + \i(t - \tfrac{h}{2})) |^{2}
		|R_{L}(\tfrac{1}{2} + \i t)|^{2} \Phi_{T}(t) \sd t\\
		&= \int_{-\infty}^{\infty} \bigg|\sum_{n \leq L} \l\{ \log\zeta(\tfrac{1}{2} + \i (t + \tfrac{h}{2})) - \log\zeta(\tfrac{1}{2} + \i (t - \tfrac{h}{2})) - A_{L/n}(\tfrac{1}{2} + \i t) \r\}
		r(n) n^{-\i t} \bigg|^{2} \Phi_{T}(t) \sd t\\
		&\quad- \mathcal{I}_{1}(T; R_{L}, A) + 2\Re \mathcal{I}_{2}(T, h; R_{L}, A)\\
		\label{KINEDI}
		&\geq - \mathcal{I}_{1}(T; R_{L}, A) + 2\Re \mathcal{I}_{2}(T, h; R_{L}, A),
	\end{align}
	where
	\begin{align}	\label{def_calI1}
		\mathcal{I}_{1}(T; R_{L}, A)
		= \int_{-\infty}^{\infty} \bigg|\sum_{n \leq L} A_{L / n}(\tfrac{1}{2} + \i t) r(n) n^{-1/2 - \i t} \bigg|^{2} \Phi_{T}(t)\sd t,
	\end{align}
	and
	\begin{align}	\label{def_calI2}
		\mathcal{I}_{2}(T, h; R_{L}, A)
		&= \sum_{m, n \leq L}\frac{r(m) \bar{r(n)}}{\sqrt{mn}} \int_{-\infty}^{\infty} \l\{ \log\zeta(\tfrac{1}{2} + \i(t + \tfrac{h}{2})) - \log\zeta(\tfrac{1}{2} + \i(t - \tfrac{h}{2})) \r\}\\
		&\qquadn{8} \times\bar{A_{L / n}(\tfrac{1}{2} + \i t)} \l( \frac{m}{n} \r)^{-\i t} \Phi_{T}(t)\sd t.
	\end{align}
	This inequality using the approximator is the second ingredient of this method.
	The terminology of the approximator reflects its role as a mean-square approximation factor.
    By employing the elementary inequality $|z|^{2} \geq 2\Re(z\bar{w}) - |w|^{2}$, this approximator allows us to bound the intractable quadratic quantity $N_{h}(t)^{2}$ from below.
    This process yields quantities that can be evaluated analytically.
    While the inequality itself is elementary, a similar strategic use of this technique proved highly effective in bounding moments of the Riemann zeta-function, as demonstrated by Conrey and Ghosh \cite{CG1984}.
    Our novelty lies in combining this principle with the resonance method to establish the resonance-correlation method.

	To apply this method, the coefficients of the resonator $r(n)$ are chosen similarly to those in Bui, Milinovich, and Ng \cite{BMN2010}, which builds on the works of Montgomery--Odlyzko \cite{MO1984}
	and Conrey--Ghosh--Gonek \cite{CGG1984}.
	Moreover, we choose the coefficients of the approximator $a(n)$ as the coefficients of the Dirichlet series of $\log\zeta(s + \i \tfrac{h}{2}) - \log\zeta(s - \i \tfrac{h}{2})$,
	which is the most natural and simplest choice.

	As a result, we obtain the following theorem.

	\begin{theorem}	\label{Nh2-Nh}
		Assume the Riemann Hypothesis.
		Let $\phi > 0$, $\ell \geq 1$,
		and let $f$ be a continuous real-valued function of bounded variation on $[0, 1]$ satisfying $I_{f}(\ell) \ceq \int_{0}^{1}f(u)^{2} u^{\ell^{2} - 1}\sd u \neq 0$.
		For any sufficiently large $T$ and $h = 2\pi \phi / \log T$, we have
		\begin{align}
			\sup_{t \in [T, 2T]}(N_{h}(t)^{2} - N_{h}(t))
			\geq I_{f}(\ell)^{-1}\mathcal{M}_{\ell, f}(\phi) - \phi(1 - \phi) - O_{\ell, f}\l( \frac{\phi^{2}}{\log T} + \frac{\phi \log\log T}{\log T} \r),
		\end{align}
		where
		\begin{align}	\label{def_calM}
			\mathcal{M}_{\ell, f}(\phi)
			&= |1 - 2\phi| \frac{2}{\pi} \ell \int_{0}^{1}\frac{\sin(\pi \phi u)}{u} \int_{0}^{1 - u} f(v) f(u + v) v^{\ell^{2} - 1}\sd v \sd u\\
			&\quad+ \frac{2}{\pi^{2}} \ell^{2} \int_{0}^{1}\frac{\sin(\pi \phi u_{1})}{u_{1}} \int_{0}^{1 - u_{1}}\frac{\sin(\pi \phi u_{2})}{u_{2}}\\
			&\qquadn{1} \times \int_{0}^{1 - (u_{1} + u_{2})} \l\{f(v) f(u_{1} + u_{2} + v) + f(u_{1} + v) f(u_{2} + v)\r\} v^{\ell^{2} - 1}\sd v \sd u_{2} \sd u_{1}\\
			&\quad+\frac{2}{\pi^{2}} \int_{0}^{1} \frac{\sin^{2}(\pi \phi u)}{u}\int_{0}^{1 - u} f(v)^{2} v^{\ell^{2} - 1}\sd v \sd u.
		\end{align}
	\end{theorem}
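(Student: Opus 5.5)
The plan is to compare $M_{2}(T,h;R_{L})$ with $M_{1}(T,h;R_{L})$ as described in Section~\ref{Sec:APSG}. Take $\Phi_{T}$ smooth, nonnegative, supported in $[T,2T]$, and of mass $\asymp T$. Choose the resonator coefficients in the Bui--Milinovich--Ng form $r(n)=\lambda_{\ell}(n)f\bigl(\log(L/n)/\log L\bigr)$, where $\lambda_{\ell}$ is the multiplicative function with Dirichlet series $\zeta(s)^{\ell}$, and take $L=T^{\theta}$ with $\theta<1$ close to $1$. Since $\Phi_{T}\ge 0$,
\begin{align}
\sup_{t\in[T,2T]}(N_{h}(t)^{2}-N_{h}(t)) \geq \frac{M_{2}-M_{1}}{M_{0}}, \qquad M_{0}=\int_{-\infty}^{\infty}|R_{L}(\tfrac12+\i t)|^{2}\Phi_{T}(t)\sd t.
\end{align}
The standard mean value theorem for Dirichlet polynomials gives $M_{0}\sim \widehat{\Phi}_{T}(0)\,C_{\ell}(\log L)^{\ell^{2}}I_{f}(\ell)$; this is the source of the factor $I_{f}(\ell)^{-1}$.

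\emph{Expanding $N_{h}$.} By the Riemann--von Mangoldt formula,
\begin{align}
N_{h}(t)=\frac{h}{2\pi}\log\frac{t}{2\pi}+\frac{1}{\pi}\Im\bigl\{\log\zeta(\tfrac12+\i(t+\tfrac h2))-\log\zeta(\tfrac12+\i(t-\tfrac h2))\bigr\}+O(1/T).
\end{align}
Write $D(t)$ for the difference of logarithms, so that the main term is $\phi$ and the oscillating term is $\pi^{-1}\Im D(t)$. Then
\begin{align}
N_{h}^{2}-N_{h}=\phi^{2}-\phi+(2\phi-1)\pi^{-1}\Im D+\pi^{-2}(\Im D)^{2}.
\end{align}
Three pieces need to be handled.

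\emph{The linear term.} The weighted integral of $\Im D$ against $|R_{L}|^{2}\Phi_{T}$ is the Montgomery--Odlyzko-type computation. Write $\log\zeta$ as a Dirichlet series over prime powers, or more rigorously use Selberg's approximate formula under RH together with a contour shift. The diagonal pairs $m=np^{k}$ then contribute
\begin{align}
\frac{1}{\pi}\sum_{p}\frac{\sin(h\log p)}{\sqrt p}\cdot\frac{\lambda_{\ell}(p)}{\sqrt p}\times(\text{shifted } f\text{-correlation}).
\end{align}
Applying the prime number theorem converts this into $\frac{2}{\pi}\ell\int\frac{\sin\pi\phi u}{u}\int f(v)f(u+v)v^{\ell^{2}-1}$. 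The sign of $D$ can be flipped by replacing $h$ by its reflection, or by choosing the sign of $f$'s contribution; this yields the factor $|1-2\phi|$.

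\emph{The quadratic term.} Write $(\Im D)^{2}=\tfrac12|D|^{2}-\tfrac12\Re D^{2}$. The $\Re D^{2}$ part is treated like the linear term and produces lower-order or explicitly computable contributions. For $|D|^{2}$, use inequality \eqref{KINEDI}: $|D|^{2}\ge 2\Re\mathcal I_{2}-\mathcal I_{1}$. Take the approximator coefficients to be $a(n)=\Lambda(n)(n^{-\i h/2}-n^{\i h/2})/\log n$, so that $\mathcal I_{1}$ and $\mathcal I_{2}$ become twisted mean values of products of Dirichlet polynomials and $\log\zeta$. The off-diagonal parts are negligible because $L\cdot X\le T^{1-\epsilon}$ (here $X=L/n$, so each product has length at most $L$). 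In the diagonal, two structures remain:
\begin{itemize}[label={}]
\item pairs where the approximator prime is matched with $\lambda_{\ell}$ primes in the resonator, giving the double integral in $u_{1},u_{2}$ with the two $f$-pairings $f(v)f(u_{1}+u_{2}+v)$ and $f(u_{1}+v)f(u_{2}+v)$;
\item self-pairing of approximator primes, $\sum_{p\le L/n}|p^{-\i h/2}-p^{\i h/2}|^{2}/p$, giving $\frac{2}{\pi^{2}}\int\frac{\sin^{2}\pi\phi u}{u}\int f^{2}v^{\ell^{2}-1}$.
\end{itemize}
The optimal inequality $2\Re\mathcal I_{2}-\mathcal I_{1}$ essentially recovers $\mathcal I_{1}$, because $\mathcal I_{2}\approx\mathcal I_{1}$ on the diagonal.

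\emph{Error terms and the main obstacle.} Prime powers $p^{k}$ with $k\ge2$, the $\sum_{p\le L}1/p$-type remainder, and the $\log t$ variation over $[T,2T]$ contribute $O(\phi\log\log T/\log T)$ and $O(\phi^{2}/\log T)$. Letting $\theta\to1$ gives the stated bound, where $-\phi(1-\phi)=\phi^{2}-\phi$ comes from the constant term.

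The main obstacle is the rigorous evaluation of $\mathcal I_{2}$. It pairs $\log\zeta$ on the critical line with a length-$L$ resonator and a variable-length approximator $A_{L/n}$, so one must justify replacing $\log\zeta$ by a short prime sum in mean square against a resonator of length close to $T$. I would first try Selberg's formula $\log\zeta=\sum_{n\le x}\Lambda(n)/(n^{s}\log n)+\text{(zero sum)}$ with $x=T^{\epsilon}$, bounding the zero sum via its Fourier structure and Cauchy--Schwarz against $M_{0}$. The alternative is to integrate $\zeta'/\zeta$ along horizontal segments, shifting to $\sigma>1/2$, where the Dirichlet series converges in mean. Careful bookkeeping of the Möbius/$\lambda_{\ell}$ convolution with the variable truncation $L/n$ is what produces the iterated integrals $\int_{0}^{1-(u_{1}+u_{2})}$.
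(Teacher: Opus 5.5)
Your architecture matches the paper's: compare $M_{2}$ with $M_{1}$, split $(\Im D)^{2}=\tfrac12|D|^{2}-\tfrac12\Re D^{2}$, bound $|D|^{2}$ below by the approximator with $a=g_{h}$, evaluate $\Re D^{2}$ exactly, and use a Bui--Milinovich--Ng resonator. But the step that should produce the factor $|1-2\phi|$ fails, and it fails exactly in the range $\phi>1/2$ that Theorem~\ref{SGZuRH} needs.

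The linear term contributes $-(1-2\phi+O(h))\,\pi^{-1}\Im\sum_{km\le L}g_{h}(k)k^{-1}r(m)\overline{r(km)}m^{-1}$. Here $g_{h}(p)=-2\i\sin(\tfrac{h}{2}\log p)$, not $\sin(h\log p)$. With the resonator $r(n)=d_{\ell}(n)f(\log n/\log L)$, this term becomes, after normalization and up to admissible errors, $(1-2\phi)\frac{2}{\pi}\ell I_{f}(\ell)^{-1}\int_{0}^{1}\frac{\sin(\pi\phi u)}{u}\int_{0}^{1-u}f(v)f(u+v)v^{\ell^{2}-1}\sd v\sd u$. For $\phi>1/2$ this is the \emph{negative} of the first term of $I_{f}(\ell)^{-1}\mathcal{M}_{\ell,f}(\phi)$.

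Neither remedy you suggest changes this. The product $r(m)\overline{r(pm)}$ is quadratic in $f$, so replacing $f$ by $-f$ does nothing. The length $h>0$ is fixed, so there is no reflection of $h$ to exploit. You also cannot alter $f$ itself, since the same $f$ enters all three terms of $\mathcal{M}_{\ell,f}$ and $I_{f}(\ell)$.

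What is needed is a twist of the resonator that is odd at every prime, so that the one-prime correlation $\sum_{m}r(m)\overline{r(pm)}/m$ changes sign. The same twist must cancel in every other main term: in $|r(n)|^{2}$, in the pairs $r(pj)\overline{r(qj)}$ from $\mathcal{I}$, and in $r(m)\overline{r(pqm)}$ from $\mathcal{J}$. When $\frac{h}{\pi}\log T\ge1$, the paper takes $r(n)=d_{\ell}(n)\lambda(n)f(\log n/\log L)$ with $\lambda$ the Liouville function. Complete multiplicativity gives $\lambda(m)\lambda(pm)=-1$, while $\lambda(pj)\lambda(qj)=\lambda(m)\lambda(pqm)=\lambda(n)^{2}=1$. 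Without this idea and the accompanying invariance check, your argument proves the inequality only with $1-2\phi$ in place of $|1-2\phi|$.

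The assembly of the remaining terms also needs correcting.

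\emph{Source of the two-prime terms.} In $2\Re\mathcal{I}_{2}-\mathcal{I}_{1}$ with $a=g_{h}$, the diagonal $km=ln$ with $k,l$ prime gives two things: the $\sin^{2}$ term ($k=l$), and, for $k=p\neq l=q$ (so $m=qj$, $n=pj$), the pairing $f(u_{1}+v)f(u_{2}+v)$. It cannot produce $f(v)f(u_{1}+u_{2}+v)$. That pairing comes from $-\frac{1}{2\pi^{2}}\Re\mathcal{J}$, via $k=p$, $l=q$ in $\sum_{klm\le L}g_{h}(k)g_{h}(l)(kl)^{-1}r(m)\overline{r(klm)}m^{-1}$. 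Since $g_{h}(p)g_{h}(q)=-4\sin(\tfrac{h}{2}\log p)\sin(\tfrac{h}{2}\log q)$, it is a positive main term, not a lower-order one.

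\emph{Choice of weight.} A compactly supported $\Phi_{T}$ is not holomorphic, so you lose the exact contour-shift identities (Lemmas~\ref{lem_STF} and~\ref{lem_STF_Sv}) that the paper uses for $\mathcal{I}_{2}$ and $\mathcal{J}$. With the Gaussian weight of width $T/\log T$, shifting to $\Re s=2$ leaves the full prime-power series, and the zero terms vanish under RH. Then $\widehat{\Phi_{T}}$, which is super-polynomially small off the diagonal when $L=T/(\log T)^{2}$, selects the diagonal. So the ``main obstacle'' you describe never arises. By contrast, truncating Selberg's formula at $x=T^{\epsilon}$ and bounding the remainder by Cauchy--Schwarz would lose the primes in $(T^{\epsilon},L]$, which carry the main terms for all $u>\epsilon$.

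\emph{Length of the resonator.} With $L=T^{\theta}$ the main terms contain $\sin(\pi\phi\theta u)$, so letting $\theta\to1$ gives only an $o(1)$ error. The stated $O_{\ell,f}(\phi^{2}/\log T+\phi\log\log T/\log T)$ requires $L=T/(\log T)^{2}$.

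\emph{Normalization.} To obtain $I_{f}(\ell)$ you want $f(\log n/\log L)$ rather than $f(\log(L/n)/\log L)$, and then $\sum_{n\le L}|r(n)|^{2}/n\sim C_{\ell}\ell^{2}(\log L)^{\ell^{2}}I_{f}(\ell)$.
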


	At the end of this section, we prove \cref{SGZuRH} using \cref{Nh2-Nh}.

	\begin{proof}[Proof of \cref{SGZuRH}]
		Applying \cref{Nh2-Nh} with $\phi = 0.508949$, $\ell = 1.15$, and $f(x) = 1 + c_{1}x$ for $c_{1} = -0.7$, we have
		\begin{align}
			\sup_{t \in [T, 2T]}(N_{h}(t)^{2} - N_{h}(t))
			\geq I_{f}(\ell)^{-1}\mathcal{M}_{\ell, f}(\phi) - \phi(1 - \phi) - o(1)
			\geq 10^{-5} > 0
		\end{align}
		for any large $T$.
		Hence, for every large $T$, there exist some $\gamma_{n}, \gamma_{n + 1} \in [T - h / 2, 2T + h / 2]$
		such that $\gamma_{n + 1} - \gamma_{n} \leq h = 2\pi \phi / \log T = 2 \pi (\phi + o(1)) / \log\gamma_{n}$.
		This concludes that $\mu \leq \phi = 0.508949 < 0.50895$, which completes the proof of \cref{SGZuRH}.
	\end{proof}

	\begin{remark}
		The constant $0.50895$ is not optimal in our method.
		In fact, while we choose $f$ to be a linear function, we can slightly improve the constant by choosing a higher-degree polynomial.
		However, the primary focus of this paper is to introduce the resonance-correlation method, so we forgo further optimization for the sake of simplicity.
	\end{remark}

\section{\textbf{Asymptotic and lower bounds for $M_{1}(T, h; R_{L})$ and $M_{2}(T, h; R_{L})$}}
	Throughout this paper, the smooth weight function $\Phi_{T}$ is defined by $\Phi_{T}(t) = \Phi\l(\frac{t - 3T / 2}{T / \log T}\r)$ with $\Phi(t) = e^{-t^{2} / 2}$.
	We will freely use the formula $\hat{\Phi_{T}}(0) = \int_{\RR}\Phi_{T}(t) \sd t = \sqrt{2\pi} T / \log T$.
	Moreover, let $g_{h}(k)$ denote the $k$-th Dirichlet coefficient of $\log\zeta(s + \i h / 2) - \log\zeta(s - \i h / 2)$,
	that is, $g_{h}(1) = 0$ and $g_{h}(k) = -2\i \Lam(k) \sin\l(\frac{h}{2}\log k\r) / \log k$ for $k \in \ZZ_{\geq 2}$.
	Here, $\Lam$ is the von Mangoldt function.
	With this setting, we prove an asymptotic formula for $M_{1}(T, h; R_{L})$ and establish a lower bound of $M_{2}(T, h; R_{L})$ in terms of resonators and approximators.

	\begin{theorem}	\label{PAFM1}
		Assume the Riemann Hypothesis.
		Let $r$ be an arithmetic function.
		For any sufficiently large $L, T$ satisfying $L \leq T / (\log T)^{2}$, and any $0 < h \leq 1$, we have
		\begin{align}	\label{AFM1}
			M_{1}(T, h; R_{L})
			&= \hat{\Phi_{T}}(0) \l(\frac{h}{2\pi} \log T \sum_{n \leq L}\frac{|r(n)|^{2}}{n}
			+ \frac{1}{\pi}\Im\sum_{km \leq L}\frac{g_{h}(k)}{k}\frac{r(m) \bar{r(km)}}{m}\r)\\
			&\quad+ O\l( \hat{\Phi_{T}}(0)\l( h + \frac{1}{T} \r)\sum_{n \leq L}\frac{|r(n)|^{2}}{n} \r).
		\end{align}
	\end{theorem}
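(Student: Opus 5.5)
We split $N_h(t)$ via the Riemann--von Mangoldt formula. Write $N(T) = \frac{1}{\pi}\theta(T) + 1 + S(T)$ with $S(t) = \frac{1}{\pi}\Im\log\zeta(\tfrac12+\i t)$, using the branch fixed in \cref{Sec:APSG}. Then
\begin{align}
	N_h(t) = \frac{1}{\pi}\l(\theta(t+\tfrac h2) - \theta(t-\tfrac h2)\r) + \frac{1}{\pi}\Im\l\{\log\zeta(\tfrac12+\i(t+\tfrac h2)) - \log\zeta(\tfrac12+\i(t-\tfrac h2))\r\}.
\end{align}
For $t$ in the essential support of $\Phi_T$ (that is, $t = 3T/2 + O(T\sqrt{\log\log T}/\log T)$, with Gaussian tails elsewhere), Stirling gives $\theta(t+\tfrac h2)-\theta(t-\tfrac h2) = \frac h2\log\frac{t}{2\pi} + O(h/t)$. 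Replacing $\log\frac{t}{2\pi}$ by $\log T$ costs $O(h)$. So the main term of $M_1$ is $\frac{h}{2\pi}\log T \int|R_L|^2\Phi_T\sd t + O(h\int|R_L|^2\Phi_T)$.

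\textbf{Mean value of $|R_L|^2\Phi_T$.} Expanding gives
\begin{align}
	\int|R_L|^2\Phi_T = \sum_{m,n\le L}\frac{r(m)\bar{r(n)}}{\sqrt{mn}}\hat{\Phi_T}\l(\log\tfrac mn\r).
\end{align}
Here $\hat{\Phi_T}(x) = \frac{T}{\log T}e^{-\i\frac{3T}{2}x}\sqrt{2\pi}\,e^{-(Tx/\log T)^2/2}$. For $m\ne n\le L$ we have $|\log(m/n)|\gg 1/L \ge (\log T)^2/T$, so the off-diagonal weights are $\ll \frac{T}{\log T}e^{-(\log T)^2/2}$. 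By $|r(m)r(n)|\le\frac12(|r(m)|^2+|r(n)|^2)$ and $L\le T$, these terms total $O(\hat{\Phi_T}(0)T^{-1}\sum|r(n)|^2/n)$. Hence $\int|R_L|^2\Phi_T = \hat{\Phi_T}(0)\sum_{n\le L}|r(n)|^2/n + O(\hat{\Phi_T}(0)T^{-1}\sum|r(n)|^2/n)$.

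\textbf{The $\log\zeta$ term.} We need
\begin{align}
	\frac1\pi\Im\int D_h(t)|R_L(\tfrac12+\i t)|^2\Phi_T(t)\sd t, \qquad D_h(t)=\log\zeta(\tfrac12+\i(t+\tfrac h2))-\log\zeta(\tfrac12+\i(t-\tfrac h2)),
\end{align}
where we use that $|R_L|^2\Phi_T$ is real, so $\Im$ can be taken outside. Formally $D_h(t) = \sum_k g_h(k)k^{-1/2-\i t}$. Multiplying by $R_L\bar{R_L}$ and integrating against $\Phi_T$, the terms with $km = n$ give the diagonal $\hat{\Phi_T}(0)\sum_{km\le L}g_h(k)r(m)\bar{r(km)}/(km)\cdot$ (with $\sqrt{k}\sqrt{m}\sqrt{km}=km$). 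This matches the claimed $\frac1\pi\Im\sum g_h(k)k^{-1}r(m)\bar{r(km)}m^{-1}$.

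To justify this we cannot use the series on the critical line. Instead we shift the contour: write $D_h$ as $\int_{1/2}^{\infty}$ of $-(\zeta'/\zeta)$ differences, or equivalently write $\log\zeta(\tfrac12+\i u)$ as $\log\zeta(\s_0+\i u) - \int_{1/2}^{\s_0}\frac{\zeta'}{\zeta}(\a+\i u)\sd\a$ with $\s_0 = 1+1/\log T$. Then we treat
\begin{align}
	\int \frac{\zeta'}{\zeta}(\a+\i(t\pm\tfrac h2))R_L(\tfrac12+\i t)\bar{R_L}(\tfrac12-\i t)\Phi_T(t)\sd t
\end{align}
as a contour integral in $s = \a+\i t$. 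A cleaner route is to move the whole integrand, viewed as an analytic function of $s$ with $\bar{R_L}$ replaced by $\sum\bar{r(n)}n^{-1+s}$ and $\Phi_T$ extended to an entire function, from $\Re s=\tfrac12$ to $\Re s = \s_0$. Under RH, $\log\zeta(s\pm\i h/2)$ is analytic for $\Re s>1/2$, and the boundary values on $\Re s = \tfrac12$ are integrable, with logarithmic singularities at zeros. Horizontal edges vanish by the Gaussian decay of $\Phi_T$. On $\Re s=\s_0$ we expand absolutely and integrate termwise. The off-diagonal terms $kn'\ne n$ carry $\hat{\Phi_T}$ at $\log(km/n)$ with shift factors $e^{(\s_0-1/2)\cdots}$. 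For $k\le T^{A}$ these are exponentially small again since $|\log(km/n)|\gg 1/L$. For large $k$, the bound $\Lam(k)/(k^{\s_0}\log k)$ together with the tail decay suffices, though this needs some care.

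\textbf{Main obstacle.} The hardest step is controlling the error in this contour shift. The growth of $\Phi_T$'s analytic continuation, $|\Phi((t+\i\s-3T/2)\log T/T)|$, is harmless, since it is a bounded factor because $\s\log T/T$ is tiny. The difficulty is ensuring that the diagonal terms $k$ with $km\le L$ on $\Re s=\s_0$ contribute exactly $g_h(k)r(m)\bar{r(km)}/(km)$ independent of $\s_0$. This holds because the $n^{s}$ and $k^{-s}m^{-s}$ factors cancel on the diagonal. The error from $\theta$ and the $\sin(\frac h2\log k)$ size gives the $O(h\hat{\Phi_T}(0)\sum|r|^2/n)$ term. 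We must also verify that bounding $\sum_{km\ne n}$ via Cauchy--Schwarz produces at most $O(\hat{\Phi_T}(0)(h+T^{-1})\sum|r(n)|^2/n)$. This uses $|g_h(k)|\le 2\Lam(k)\min(1,h\log k)/\log k$ and the condition $L\le T/(\log T)^2$ for separating frequencies.
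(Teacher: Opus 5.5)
Your route is essentially the paper's. You split $N_h$ by the Riemann--von Mangoldt formula and evaluate $\int|R_L|^2\Phi_T$ through the diagonal (this is \cref{lem5MSMA}). You then handle the $S$-term by moving $\log\zeta$ into its region of absolute convergence, which is what the paper imports as Proposition 3 of \cite{IKT2025} and what the Selberg--Tsang formula \cref{lem_STF} encodes.

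There is one false step in your contour shift. You assert that under RH $\log\zeta(s\pm\mathrm{i}h/2)$ is analytic for $\Re s>\tfrac12$. But $\zeta$ has a pole at $1$, so $\log\zeta(s\pm\mathrm{i}h/2)$ has a logarithmic branch point at $s=1\mp\mathrm{i}h/2$. With the branch fixed in \cref{Sec:APSG}, it also has a horizontal cut $\{\sigma\mp\mathrm{i}h/2:\tfrac12<\sigma\le1\}$ across which it jumps by $2\pi\mathrm{i}$. This cut lies inside the strip you cross, because $\sigma_0=1+1/\log T>1$, and you cannot avoid crossing $\Re s=1$ if you want absolute convergence. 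So ``integral on $\Re s=\tfrac12$ equals integral on $\Re s=\sigma_0$ plus vanishing horizontal edges'' is not an identity as stated. You must add the integral along each cut of $2\pi\mathrm{i}$ times $R_L(s)\bar{R_L}(1-s)$ times the continued $\Phi_T$. This is exactly the term $-2\pi\int_0^{1/2}V(-v-\mathrm{i}\sigma)\,\mathrm{d}\sigma$ in \cref{lem_STF}. In the analogous computation in the proof of \cref{CSthCDP}, it is the source of the $O((1+x^{-1/2})\Phi_T(0))$ error.

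The repair is routine, and the missing term is negligible.
\begin{itemize}
\item On the cut, writing $s=\tfrac12+\mathrm{i}t$, you have $\Re t=\mp h/2$ and $-\tfrac12\le\Im t\le0$, so $|\Phi_T(t)|\ll e^{-\frac98(\log T)^2}$.
\item For $\tfrac12\le\Re s\le1$, Cauchy--Schwarz gives $|R_L(s)\bar{R_L}(1-s)|\le L^{3/2}\sum_{n\le L}|r(n)|^2/n$.
\item With $L\le T$, the cut therefore contributes far less than $\hat{\Phi_T}(0)T^{-1}\sum_{n\le L}|r(n)|^2/n$.
\end{itemize}
With this term added, your argument proves the theorem. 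You also need to truncate to $[T,2T]$ before applying the Riemann--von Mangoldt formula and then extend back, using $N_h(t),S(t)\ll\log(|t|+3)$ and \cref{CscrM-INEsRt} as the paper does.

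A minor point: no ``shift factors'' survive on the off-diagonal terms. Each term $(km/n)^{-\mathrm{i}t}\Phi_T(t)$ is entire with Gaussian decay, so its integral along $\Im t=-(\sigma_0-\tfrac12)$ equals $\hat{\Phi_T}\bigl(\tfrac{1}{2\pi}\log\tfrac{km}{n}\bigr)$ exactly. The separation $|\log(km/n)|\gg1/L\ge(\log T)^2/T$ then kills these terms, as you say.
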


	\begin{theorem}	\label{PAFM2}
		Assume the Riemann Hypothesis.
		Let $a, r$ be arithmetic functions.
		For any large $L, T$ satisfying $L \leq T / (\log T)^{2}$, and any $0 < h \leq 1$, we have
		\begin{align}	\label{AFM2}
			M_{2}(T, h; R_{L})
			&\geq \l(\frac{h}{\pi}\log T + O(h)\r) M_{1}(T, h; R_{L}) - \hat{\Phi_{T}}(0) \l( \frac{h}{2\pi}\log T \r)^{2} \sum_{n \leq L}\frac{|r(n)|^{2}}{n}\\
			&\quad+ \hat{\Phi_{T}}(0)\frac{1}{2\pi^{2}}\Re\sum_{\substack{km, ln \leq L\\ km = ln}} \frac{(2g_{h}(k) - a(k)) \bar{a(l)}}{\sqrt{kl}} \frac{r(m) \bar{r(n)}}{\sqrt{mn}}\\
			&\quad- \hat{\Phi_{T}}(0) \frac{1}{2\pi^{2}}\Re\sum_{klm \leq L}\frac{g_{h}(k) g_{h}(l)}{kl} \frac{r(m) \bar{r(klm)}}{m}\\
			&\quad- O\l( \hat{\Phi_{T}}(0)\l( h^{2} \log T + \frac{1}{T}\l( \log T + \sum_{k \leq L}\frac{|a(k)|^{2}}{k} \r) \r) \sum_{n \leq L}\frac{|r(n)|^{2}}{n} \r).
		\end{align}
	\end{theorem}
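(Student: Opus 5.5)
We start from the Riemann--von Mangoldt formula in the form
\begin{align}
	N_{h}(t) = \frac{h}{2\pi}\log\frac{t}{2\pi} + \frac{1}{\pi}\Im\bigl(\log\zeta(\tfrac12+\i(t+\tfrac h2)) - \log\zeta(\tfrac12+\i(t-\tfrac h2))\bigr) + O\Bigl(\frac{h}{T}\Bigr),
\end{align}
valid for $t$ in the essential support of $\Phi_{T}$. Write $N_{h}(t) = \bar{N} + \frac{1}{\pi}\Im D_{h}(t)$, where $\bar{N} = \frac{h}{2\pi}\log T + O(h)$ up to the $\log(t/T)$ variation. The Gaussian concentration of $\Phi_{T}$ on $|t - 3T/2| \ll T\sqrt{\log\log T}/\log T$ makes that variation contribute only $O(h)$.

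Expanding the square gives the identity
\begin{align}
	N_{h}^{2} = 2\bar{N}N_{h} - \bar{N}^{2} + \frac{1}{\pi^{2}}(\Im D_{h})^{2}.
\end{align}
Multiplying by $|R_{L}|^{2}\Phi_{T}$ and integrating, the first two terms produce exactly
\begin{align}
	\Bigl(\frac{h}{\pi}\log T + O(h)\Bigr)M_{1} - \hat{\Phi_{T}}(0)\Bigl(\frac{h}{2\pi}\log T\Bigr)^{2}\sum_{n \leq L}\frac{|r(n)|^{2}}{n},
\end{align}
the latter by the mean value theorem for Dirichlet polynomials with $L \leq T/(\log T)^{2}$. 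Their error terms fit into the $O(h^{2}\log T)$ remainder.

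For the last term we use $(\Im D)^{2} = \tfrac12|D|^{2} - \tfrac12\Re(D^{2})$.
\begin{itemize}
	\item For $\tfrac12|D|^{2}$ we apply the approximator inequality \eqref{KINEDI}. This gives the lower bound $\tfrac12\bigl(-\mathcal{I}_{1} + 2\Re\mathcal{I}_{2}\bigr)$.
	\item For $-\tfrac12\Re\int D_{h}^{2}|R_{L}|^{2}\Phi_{T}$ we compute an asymptotic formula.
\end{itemize}

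$\mathcal{I}_{1}$ is a mean square of the Dirichlet polynomial $\sum_{km \leq L} a(k)r(m)(km)^{-1/2-\i t}$ (after reindexing $A_{L/m}$). Montgomery--Vaughan gives the diagonal $\hat{\Phi_{T}}(0)\sum_{km = ln}a(k)\bar{a(l)}r(m)\bar{r(n)}/\sqrt{klmn}$ plus an error. By Cauchy--Schwarz that error is $O(\hat{\Phi_{T}}(0)T^{-1}\sum_{k}|a(k)|^{2}/k\cdot\sum|r|^{2}/n)$, helped by the rapid decay of $\hat\Phi_{T}$ beyond $\log T/T$.

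For $\mathcal{I}_{2}$ and the $D^{2}$ integral we shift the line. We write $D_{h}$ on $\Re s = \tfrac12$ as a limit from $\Re s = \tfrac12 + \epsilon$, or more concretely replace $\log\zeta$ by its Dirichlet series on $\Re s = 1 + 1/\log T$ and move the contour back using RH. Equivalently, we use the standard explicit/Selberg-type representation of $\log\zeta(\tfrac12+\i t)$ as a short prime sum plus zero-sum, with the Gaussian weight controlling the zero contributions. Then $\mathcal{I}_{2}$ becomes, after integrating the oscillatory factor $(m/n)^{-\i t}(jk)^{-\i t}$ against $\Phi_{T}$, a diagonal sum $\sum_{km = ln} g_{h}(k)\bar{a(l)}r(m)\bar{r(n)}/\sqrt{klmn}$. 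Here only $k \leq L$ survives since $km = ln \leq L$, and off-diagonal terms are negligible by the decay of $\hat\Phi_{T}$ at frequencies $\gg \log T/T$. The same computation for $\int D^{2}|R|^{2}\Phi_{T}$ involves $D^{2} = \sum_{k,l} g_{h}(k)g_{h}(l)(kl)^{-1/2-\i t}$, with no conjugates, so the diagonal is $klm = n$. This yields the term with $r(m)\bar{r(klm)}/m$, after the normalization $\sqrt{kl}\sqrt{m\cdot klm} = klm$. Collecting terms gives the coefficient $\frac{1}{2\pi^{2}}\Re\sum (2g_{h}(k) - a(k))\bar{a(l)}\cdots$ and the stated negative cubic term.

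\textbf{Main obstacle.} The difficulty is justifying the mean values involving $\log\zeta$ on the critical line, where its Dirichlet series does not converge. I would handle this by shifting to $\Re s = \sigma_{0} > 1$, which is legitimate for $\int D(s)\overline{A(s)}\cdots$ viewed as an analytic function in $s$ (the conjugate of $A$ taken as $A^{*}(1-s)$). Under RH, $\log\zeta$ is analytic in $\Re s > \tfrac12$, and $|\log\zeta(\sigma+\i t)| \ll \log T/\log\log T$ near $\sigma = \tfrac12$. Both factors $A$ and $R$ have length at most $L \leq T/(\log T)^{2}$, and the Gaussian $\Phi_{T}$ extends to an entire function. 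This makes the horizontal contributions negligible and produces errors of size $O(\hat{\Phi_{T}}(0)T^{-1}\log T\sum|r|^{2}/n)$. The jump discontinuities of $\log\zeta$ at zeros on the line have measure zero, so they are harmless.
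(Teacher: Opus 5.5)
Your proposal is correct and follows essentially the same route as the paper: the Riemann--von Mangoldt expansion combined with $(\Im z)^{2} = \frac{1}{2}(|z|^{2} - \Re z^{2})$ (\cref{FDM2DP}), the approximator inequality for $\mathcal{I}$ with diagonal evaluation of $\mathcal{I}_{1}$ and $\mathcal{I}_{2}$, and an RH contour shift for $\mathcal{J}$; the paper packages that contour shift through Tsang's formula (\cref{lem_STF}) and its squared analogue (\cref{lem_STF_Sv}). One small correction: $\log\zeta$ is not analytic in $\Re s > \frac{1}{2}$ because of the pole at $s = 1$, so the shift also picks up a branch-cut term (the paper's $\mathscr{P}(v; V)$), but it is harmless because it only involves $\Phi_{T}$ near $t = \pm h/2$, where $\Phi_{T}$ is of order $e^{-9(\log T)^{2}/8}$.
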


	Although \cref{PAFM1} has essentially been proved in \cite{IKT2025}, we give a proof in the next section for completeness.

\section{\textbf{A mean value theorem for Dirichlet polynomials}}
	In this section, we give some auxiliary lemmas and a proof of \cref{PAFM1}.

	\begin{lemma}	\label{CscrM-INEsRt}
		For any arithmetic function $r$ and any $L \geq 3$, we have
		\begin{align}	\label{INEsRt}
			|R_{L}(\tfrac{1}{2} + \i t)|^{2}
			\leq \sum_{m, n \leq L} \frac{|r(m) r(n)|}{\sqrt{mn}}
			\leq L \sum_{n \leq L}\frac{|r(n)|^{2}}{n}.
		\end{align}
	\end{lemma}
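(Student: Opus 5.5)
The plan is to get both inequalities from the triangle inequality followed by Cauchy--Schwarz.

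For the first inequality, write $R_{L}(\tfrac{1}{2} + \i t) = \sum_{n \leq L} r(n) n^{-1/2} n^{-\i t}$. Expanding the square modulus gives
\begin{align}
	|R_{L}(\tfrac{1}{2} + \i t)|^{2} = \sum_{m, n \leq L}\frac{r(m)\bar{r(n)}}{\sqrt{mn}}\l(\frac{m}{n}\r)^{-\i t}.
\end{align}
Since $|(m/n)^{-\i t}| = 1$ for real $t$, the triangle inequality bounds this by $\sum_{m, n \leq L}|r(m) r(n)|/\sqrt{mn}$. Equivalently, bound $|R_{L}(\tfrac12+\i t)| \leq \sum_{n\le L}|r(n)|n^{-1/2}$ and square; the double sum is exactly $\bigl(\sum_{n \leq L}|r(n)| n^{-1/2}\bigr)^{2}$.

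For the second inequality, apply Cauchy--Schwarz to the single sum:
\begin{align}
	\Bigl(\sum_{n \leq L}\frac{|r(n)|}{\sqrt{n}}\Bigr)^{2}
	\leq \Bigl(\sum_{n \leq L} 1\Bigr)\Bigl(\sum_{n \leq L}\frac{|r(n)|^{2}}{n}\Bigr)
	\leq L\sum_{n \leq L}\frac{|r(n)|^{2}}{n}.
\end{align}
The last step uses $\#\{n \leq L\} = \lfloor L \rfloor \leq L$.

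There is no genuine obstacle here. The only point to check is that the middle quantity is the square of $\sum_{n\le L}|r(n)|n^{-1/2}$, which is immediate because the summand factors as $\frac{|r(m)|}{\sqrt m}\cdot\frac{|r(n)|}{\sqrt n}$. The hypothesis $L \geq 3$ plays no role beyond making the sums nonempty, and it is presumably imposed only for consistency with later uses.
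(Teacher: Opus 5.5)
Your proof is correct: triangle inequality, then Cauchy--Schwarz against the $\lfloor L\rfloor \le L$ terms, gives both inequalities. The paper gives no argument of its own and simply cites Lemma 3.1 of \cite{IKT2025} with $r(n)$ replaced by $r(n)n^{-1/2}$; your argument is the standard proof of that statement.
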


	This is Lemma 3.1 in \cite{IKT2025} except that $r(n)$ in the original statement is replaced by $r(n) n^{-1/2}$.

	\begin{lemma}	\label{lem5MSMA}
		For any arithmetic function $r$ and any $L, T \geq 3$ satisfying $L \leq T / (\log{T})^{2}$, we have
		\begin{align}
			\int_{-\infty}^{\infty}|R_{L}(\tfrac{1}{2} + \i t)|^{2}\Phi_{T}(t)\sd t
			= \hat{\Phi_{T}}(0) \l( 1 + O\l( \frac{1}{T} \r) \r) \sum_{n \leq L} \frac{|r(n)|^{2}}{n}.
		\end{align}
	\end{lemma}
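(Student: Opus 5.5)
Expanding the square $|R_{L}(\tfrac{1}{2} + \i t)|^{2}$ and integrating term by term gives
\begin{align}
	\int_{-\infty}^{\infty}|R_{L}(\tfrac{1}{2} + \i t)|^{2}\Phi_{T}(t)\sd t
	= \sum_{m, n \leq L}\frac{r(m)\bar{r(n)}}{\sqrt{mn}} \int_{-\infty}^{\infty}\l(\frac{m}{n}\r)^{-\i t}\Phi_{T}(t)\sd t
	= \sum_{m, n \leq L}\frac{r(m)\bar{r(n)}}{\sqrt{mn}} \hat{\Phi_{T}}\l(\log\frac{m}{n}\r),
\end{align}
where $\hat{\Phi_{T}}(\xi) = \int_{\RR}\Phi_{T}(t)e^{-\i t\xi}\sd t$. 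The diagonal terms $m = n$ contribute exactly $\hat{\Phi_{T}}(0)\sum_{n \leq L}|r(n)|^{2}/n$, so it remains to show that the off-diagonal terms are $O(\hat{\Phi_{T}}(0)T^{-1}\sum_{n\leq L}|r(n)|^{2}/n)$.

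Since $\Phi_{T}(t) = \Phi((t - 3T/2)/(T/\log T))$ and $\Phi(t) = e^{-t^{2}/2}$, we have
\begin{align}
	|\hat{\Phi_{T}}(\xi)| = \frac{T}{\log T}\sqrt{2\pi}\, e^{-\frac{1}{2}(\xi T/\log T)^{2}} = \hat{\Phi_{T}}(0)\, e^{-\frac{1}{2}(\xi T/\log T)^{2}}.
\end{align}
For $m \neq n \leq L$ we have $|\log(m/n)| \geq \log(1 + 1/L) \geq 1/(2L)$. The hypothesis $L \leq T/(\log T)^{2}$ then gives $|\xi| T/\log T \geq \tfrac{1}{2}\log T$, so
\begin{align}
	|\hat{\Phi_{T}}(\log(m/n))| \leq \hat{\Phi_{T}}(0) e^{-(\log T)^{2}/8},
\end{align}
which is smaller than any fixed power of $T$.

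To handle the sum of the off-diagonal coefficients, I use the first inequality of \cref{CscrM-INEsRt}, or directly Cauchy--Schwarz:
\begin{align}
	\sum_{m, n \leq L}\frac{|r(m)r(n)|}{\sqrt{mn}} \leq L\sum_{n\leq L}\frac{|r(n)|^{2}}{n}.
\end{align}
Hence the off-diagonal terms are bounded by
\begin{align}
	\hat{\Phi_{T}}(0)\, L e^{-(\log T)^{2}/8}\sum_{n\leq L}\frac{|r(n)|^{2}}{n} \leq \hat{\Phi_{T}}(0)\, T e^{-(\log T)^{2}/8}\sum_{n\leq L}\frac{|r(n)|^{2}}{n},
\end{align}
and $T e^{-(\log T)^{2}/8} \ll T^{-1}$ for $T \geq 3$.

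The only real issue is the separation step: with $L \ll T/(\log T)^{2}$, the Gaussian decay must beat the factor $L$ coming from the crude Cauchy--Schwarz bound. The computation above shows it does, with a great deal of room to spare. This completes the proof of \cref{lem5MSMA}.
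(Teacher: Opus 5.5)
Your proof is correct and is essentially the argument the paper relies on. The paper cites Lemma~3.2 of \cite{IKT2025} for this statement, and its own proofs of \cref{EST_TDPR} and \cref{AFscrJDP} use your steps: expand the square, keep the diagonal, make the off-diagonal terms negligible via the Gaussian decay of $\hat{\Phi_{T}}$ together with $|\log(m/n)| \geq 1/(2L)$ and $L \leq T/(\log T)^{2}$, and bound the coefficient sum by \cref{CscrM-INEsRt}. Your Fourier normalization $e^{-\i t\xi}$ differs from the paper's $e^{-2\pi\i x z}$, but you use it consistently, so this causes no problem.
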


	This corresponds to Lemma 3.2 in \cite{IKT2025} under the same modification for $r(n)$ as above.

	Before proceeding with the proof, we recall that the branch of $\log\zeta(s)$ was fixed in Section 2,
	and define $S(t) = \frac{1}{\pi}\Im\log\zeta(\frac{1}{2} + \i t)$.

	\begin{proof}[Proof of \cref{PAFM1}]
		Since $N_{h}(t), S(t) \ll \log(|t| + 3)$ holds and $\Phi_{T}$ is negligibly small for $t \in \RR \setminus [T, 2T]$, we have
		\begin{align}	\label{SEQM1}
			M_{1}(T, h; R_{L})
			= \int_{T}^{2T} N_{h}(t) |R_{L}(\tfrac{1}{2} + \i t)|^{2} \Phi_{T}(t)\sd t + O\l( \frac{1}{T} \sum_{n \leq L}\frac{|r(n)|^{2}}{n} \r)
		\end{align}
		by \cref{CscrM-INEsRt}.
		By the Riemann--von Mangoldt formula, we have
		\begin{align}	\label{SIRvMF}
			N_{h}(t)
			= \frac{h}{2\pi} \log T + S(t + \tfrac{h}{2}) - S(t - \tfrac{h}{2}) + O\l( h + \frac{1}{T} \r)
		\end{align}
		for $t \in [T, 2T]$, $0 < h \leq 1$.
		Inserting this formula, we see that the integral on the right hand side of \cref{SEQM1} is
		\begin{align}
			&\frac{h}{2\pi}\log T\int_{T}^{2T} |R_{L}(\tfrac{1}{2} + \i t)|^{2} \Phi_{T}(t)\sd t
			+ \int_{T}^{2T} \l\{ S(t + \tfrac{h}{2}) - S(t - \tfrac{h}{2}) \r\} |R_{L}(\tfrac{1}{2} + \i t)|^{2} \Phi_{T}(t)\sd t\\
			&\quad+ O\l( \l(h + \frac{1}{T}\r) \int_{-\infty}^{\infty}|R_{L}(\tfrac{1}{2} + \i t)|^{2} \Phi_{T}(t)\sd t \r).
		\end{align}
		Applying \cref{lem5MSMA} and the rapid decay of $\Phi_{T}$ outside $[T, 2T]$, we obtain
		\begin{align}	\label{p1PAFM1fL}
			&M_{1}(T, h; R_{L}) =\\
			&\frac{h}{2\pi}\log T\int_{-\infty}^{\infty} |R_{L}(\tfrac{1}{2} + \i t)|^{2} \Phi_{T}(t)\sd t
			+ \int_{-\infty}^{\infty} \l\{ S(t + \tfrac{h}{2}) - S(t - \tfrac{h}{2}) \r\} |R_{L}(\tfrac{1}{2} + \i t)|^{2} \Phi_{T}(t)\sd t\\
			&+ O\l( \hat{\Phi_{T}}(0)\l(h + \frac{1}{T}\r)\sum_{n \leq L}\frac{|r(n)|^{2}}{n} \r).
		\end{align}
		Moreover, we use \cref{lem5MSMA} again to find that the first term is
		\begin{align}
			\hat{\Phi_{T}}(0) \frac{h}{2\pi}\log T \sum_{n \leq L}\frac{|r(n)|^{2}}{n}  + O\l( \hat{\Phi_{T}}(0)\frac{1}{T} \sum_{n \leq L}\frac{|r(n)|^{2}}{n} \r).
		\end{align}
		By Proposition 3 in \cite{IKT2025}, we have
		\begin{align}	\label{RISth}
			&\int_{-\infty}^{\infty} \l\{ S(t + \tfrac{h}{2}) - S(t - \tfrac{h}{2}) \r\} |R_{L}(\tfrac{1}{2} + \i t)|^{2} \Phi_{T}(t)\sd t\\
			&= \hat{\Phi_{T}}(0) \frac{1}{\pi}\Im\sum_{km \leq L}\frac{g_{h}(k)}{k} \frac{r(m) \bar{r(km)}}{m} + O\l( \hat{\Phi_{T}}(0)\frac{1}{T}\sum_{n \leq L}\frac{|r(n)|^{2}}{n} \r)
		\end{align}
		when $L \leq T / (\log T)^{2}$.
		Combining these estimates, we obtain \cref{PAFM1}.
	\end{proof}

\section{\textbf{Calculations for $M_{2}(T, h; R_{L})$}}	\label{Sec:CalM2}
	To prove \cref{PAFM2}, we require some asymptotic formulas and inequalities related to $M_{2}(T, h; R_{L})$.
	At the end of this section, we give a proof of \cref{PAFM2}.

	\subsection{A deformation of $M_{2}(T, h; R_{L})$}	\label{subSec:DM2}
		Put
		\begin{align}	\label{def_scrIf}
			\mathcal{I}(T, h; R_{L})
			= \int_{-\infty}^{\infty}\l|\log\zeta(\tfrac{1}{2} + \i(t + \tfrac{h}{2})) - \log\zeta(\tfrac{1}{2} + \i(t - \tfrac{h}{2}))\r|^{2}|R_{L}(\tfrac{1}{2} + \i t)|^{2}\Phi_{T}(t)\sd t,
		\end{align}
		and
		\begin{align}	\label{def_scrJf}
			\mathcal{J}(T, h; R_{L})
			= \int_{-\infty}^{\infty}\l\{\log\zeta(\tfrac{1}{2} + \i(t + \tfrac{h}{2})) - \log\zeta(\tfrac{1}{2} + \i(t - \tfrac{h}{2}))\r\}^{2}|R_{L}(\tfrac{1}{2} + \i t)|^{2}\Phi_{T}(t)\sd t.
		\end{align}
		As the first step toward proving \cref{PAFM2}, we establish the following lemma.

		\begin{lemma}	\label{FDM2DP}
			For any arithmetic function $r$, any large $L, T$ satisfying $L \leq T / (\log T)^{2}$ and any $0 < h \leq 1$, we have
			\begin{align}
				&M_{2}(T, h; R_{L})\\
				&= \l(\frac{h}{\pi}\log T + O(h)\r) M_{1}(T, h; R_{L}) - \hat{\Phi_{T}}(0) \l( \frac{h}{2\pi}\log T \r)^{2} \sum_{n \leq L}\frac{|r(n)|^{2}}{n}\\
				&\quad+\frac{1}{2\pi^{2}}\mathcal{I}(T, h; R_{L}) - \frac{1}{2\pi^{2}} \Re\mathcal{J}(T, h; R_{L})
				+ O\l( \hat{\Phi_{T}}(0)\l( h^{2} \log T + \frac{\log T}{T} \r)\sum_{n \leq L}\frac{|r(n)|^{2}}{n} \r).
			\end{align}
		\end{lemma}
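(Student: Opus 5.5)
We square the Riemann--von Mangoldt formula \cref{SIRvMF} and integrate against $|R_{L}(\tfrac{1}{2} + \i t)|^{2}\Phi_{T}(t)$. As in the proof of \cref{PAFM1}, the bounds $N_{h}(t), S(t) \ll \log(|t| + 3)$, \cref{CscrM-INEsRt} and the rapid decay of $\Phi_{T}$ let us restrict to $t \in [T, 2T]$, and later extend back to $\RR$, with errors $O(\frac{(\log T)^{2}}{T}\cdot\frac{1}{\log T}\hat{\Phi_{T}}(0)\sum_{n\le L}|r(n)|^{2}/n)$, which are acceptable.

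On $[T,2T]$ write $N_{h}(t) = \kappa + \Delta(t) + E(t)$, where $\kappa = \frac{h}{2\pi}\log T$, $\Delta(t) = S(t + \tfrac{h}{2}) - S(t - \tfrac{h}{2})$ and $E(t) = O(h + 1/T)$. Then
\begin{align}
N_{h}^{2} = 2\kappa N_{h} - \kappa^{2} + \Delta^{2} + 2E\,\Delta + E^{2}.
\end{align}
Here we use $(\kappa+\Delta+E)^{2} = 2\kappa(\kappa+\Delta+E) - \kappa^{2} + \Delta^{2} + 2E\Delta + E^{2}$ and absorb $2\kappa E$ back into $2\kappa N_{h}$. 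For the cross term, write $2E\Delta = 2E(N_{h} - \kappa - E)$. This gives $O(h)N_{h}$, which goes into the factor $(\frac{h}{\pi}\log T + O(h))M_{1}$, plus $O((h+\frac1T)\kappa) + O((h+\frac1T)^{2})$. After applying \cref{lem5MSMA}, the latter contributes $O(\hat{\Phi_{T}}(0)(h^{2}\log T + \frac{\log T}{T})\sum |r(n)|^{2}/n)$. The same lemma turns the $-\kappa^{2}$ term into $-\hat{\Phi_{T}}(0)\kappa^{2}\sum|r(n)|^{2}/n$, with error $\kappa^{2}/T \ll \log T / T$ since $h \le 1$. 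Because $N_{h} \ge 0$ and the weight is nonnegative, the $O(h)$ coefficient multiplying $N_{h}$ integrates to $O(h)M_{1}$ exactly. The term $2\kappa N_{h}$ gives $\frac{h}{\pi}\log T\, M_{1}$.

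It remains to handle $\Delta^{2}$. Set $D(t) = \log\zeta(\tfrac{1}{2} + \i(t + \tfrac{h}{2})) - \log\zeta(\tfrac{1}{2} + \i(t - \tfrac{h}{2}))$. Then $\Delta = \frac{1}{\pi}\Im D$, and the identity $(\Im z)^{2} = \frac{1}{2}(|z|^{2} - \Re z^{2})$ gives
\begin{align}
\Delta^{2} = \frac{1}{2\pi^{2}}\l(|D|^{2} - \Re D^{2}\r).
\end{align}
Integrating against the real weight $|R_{L}|^{2}\Phi_{T}$ over $\RR$ yields exactly $\frac{1}{2\pi^{2}}\mathcal{I}(T,h;R_{L}) - \frac{1}{2\pi^{2}}\Re\mathcal{J}(T,h;R_{L})$. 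The passage between $[T,2T]$ and $\RR$ is justified because $D(t) \ll \log(|t|+3)$, so the tails are negligible.

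The main bookkeeping difficulty is the cross term $E\Delta$. We cannot bound $\Delta$ pointwise by $O(1)$, and a Cauchy--Schwarz bound would need a second moment of $\Delta$ under the resonator weight, which we do not yet have. Rewriting $\Delta$ through $N_{h}$ and using $N_{h} \ge 0$ avoids this. The price is the $O(h)$ in the coefficient of $M_{1}$, which is precisely why that coefficient appears in the statement.
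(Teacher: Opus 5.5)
Your proposal is correct and is essentially the paper's proof: expand the square of \cref{SIRvMF} into $N_h^2=(\frac{h}{\pi}\log T+O(h))N_h-\kappa^2+\Delta^2+O(h^2\log T+\frac{\log T}{T})$ on $[T,2T]$, integrate against the nonnegative weight $|R_L(\frac12+\i t)|^2\Phi_T(t)$, apply \cref{lem5MSMA}, and finish with $(\Im z)^2=\frac12(|z|^2-\Re z^2)$. Three slips need fixing, none of which affects the result:
\begin{enumerate}
\item $2EN_h$ is $O(h+\frac1T)N_h$, not $O(h)N_h$, and its $O(N_h/T)$ part must be absorbed into $O(\frac{\log T}{T})$ via $N_h\ll\log T$; this is why the paper invokes that bound.
\item $\kappa^2/T$ is only $\ll h^2(\log T)^2/T\ll h^2\log T$, not $\ll\frac{\log T}{T}$; it is still absorbed by the error term.
\item $D(t)\ll\log(|t|+3)$ is false, since $\Re D$ has logarithmic singularities at zeros. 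You should instead extend $\int_T^{2T}\Delta^2|R_L|^2\Phi_T\sd t$ to $\RR$ using $\Delta\ll\log(|t|+3)$, and only then apply the pointwise identity on $\RR$, which is the order the paper uses.
\end{enumerate}
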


		Thanks to this lemma, our remaining task is to compute the integrals $\mathcal{I}(T, h; R_{L})$ and $\mathcal{J}(T, h; R_{L})$.

		\begin{proof}
			By \cref{SIRvMF} and the estimate $N_{h}(t) \ll \log(|t| + 3)$, we have
			\begin{align}
				N_{h}(t)^{2}
				&= \l(\frac{h}{\pi} \log T + O(h)\r) N_{h}(t) - \l( \frac{h}{2\pi} \log T \r)^{2} + \l( S(t + \tfrac{h}{2}) - S(t - \tfrac{h}{2}) \r)^{2}\\
				&\quad+ O\l( h^{2} \log T + \frac{\log T}{T} \r)
			\end{align}
			for $t \in [T, 2T]$.
			Since $\Phi_{T}$ is negligibly small on $\RR \setminus [T, 2T]$, we find that
			\begin{align}
				&M_{2}(T, h; R_{L})\\
				&= \int_{T}^{2T} N_{h}(t)^{2}|R_{L}(\tfrac{1}{2} + \i t)|^{2} \Phi_{T}(t)\sd t + O\l( \frac{1}{T} \sum_{n \leq L}\frac{|r(n)|^{2}}{n} \r)\\
				&= \int_{T}^{2T} \l\{ \l(\frac{h}{\pi} \log T + O(h)\r) N_{h}(t) - \l(\frac{h}{2\pi} \log T\r)^{2}
				+ \l\{ S(t + \tfrac{h}{2}) - S(t - \tfrac{h}{2}) \r\}^{2} \r\}|R_{L}(\tfrac{1}{2} + \i t)|^{2} \Phi_{T}(t)\sd t\\
				&\quad+ O\l( \int_{T}^{2T}\l(h + \frac{\log T}{T}\r)|R_{L}(\tfrac{1}{2} + \i t)|^{2} \Phi_{T}(t)\sd t + \hat{\Phi_{T}}(0)\frac{1}{T} \sum_{n \leq L}\frac{|r(n)|^{2}}{n} \r)\\
				&= \int_{-\infty}^{\infty} \l\{ \l(\frac{h}{\pi} \log T + O(h)\r) N_{h}(t) - \l(\frac{h}{2\pi} \log T\r)^{2}
				+ \l\{ S(t + \tfrac{h}{2}) - S(t - \tfrac{h}{2}) \r\}^{2} \r\}|R_{L}(\tfrac{1}{2} + \i t)|^{2} \Phi_{T}(t)\sd t\\
				&\quad+ O\l( \hat{\Phi_{T}}(0)\l( h^{2} \log T + \frac{\log T}{T} \r)\sum_{n \leq L}\frac{|r(n)|^{2}}{n} \r).
			\end{align}
			By the definition of $M_{1}$ and \cref{lem5MSMA}, we have
			\begin{align}
				M_{2}(T, h; R_{L})
				&= \l(\frac{h}{\pi}\log T + O(h)\r) M_{1}(T, h; R_{L}) - \hat{\Phi_{T}}(0) \l( \frac{h}{2\pi}\log T \r)^{2} \sum_{n \leq L}\frac{|r(n)|^{2}}{n}\\
				&\quad+\int_{-\infty}^{\infty} \l\{ S(t + \tfrac{h}{2}) - S(t - \tfrac{h}{2}) \r\}^{2}|R_{L}(\tfrac{1}{2} + \i t)|^{2} \Phi_{T}(t)\sd t\\
				&\quad+ O\l( \hat{\Phi_{T}}(0) \l( h^{2} \log T + \frac{\log T}{T} \r) \sum_{n \leq L}\frac{|r(n)|^{2}}{n} \r).
			\end{align}
			Using the basic identity $(\Im z)^{2} = \frac{1}{2}(|z|^{2} - \Re(z^{2}))$ for $z \in \CC$, we complete the proof of \cref{FDM2DP}.
		\end{proof}

	\subsection{Asymptotic formula for $\mathcal{J}(T, h; R_{L})$}
		In this subsection, we prove the following formula.
		\begin{proposition}	\label{AFscrJDP}
			Assume the Riemann Hypothesis.
			For any arithmetic function $r$, any large $T$, and any $0 < h \leq 1$, we have
			\begin{align}
				\mathcal{J}(T, h; R_{L})
				= \hat{\Phi_{T}}(0) \sum_{klm \leq L}\frac{g_{h}(k) g_{h}(l)}{kl} \frac{r(m) \bar{r(klm)}}{m}
				+ O\l( \hat{\Phi_{T}}(0)\frac{1}{T}\sum_{n \leq L}\frac{|r(n)|^{2}}{n} \r).
			\end{align}
		\end{proposition}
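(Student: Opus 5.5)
We will prove the formula by contour shifting, exploiting the fact that under RH the function $G(s) \ceq \log\zeta(s + \i\tfrac{h}{2}) - \log\zeta(s - \i\tfrac{h}{2})$ is holomorphic in $\Re s > \tfrac{1}{2}$. There it is given by the Dirichlet series $\sum_{k} g_{h}(k) k^{-s}$ when $\Re s > 1$. We write the integrand as
\begin{align}
	G(\tfrac{1}{2} + \i t)^{2} R_{L}(\tfrac{1}{2} + \i t) \bar{R_{L}}(\tfrac{1}{2} - \i t) \Phi_{T}(t),
\end{align}
where $\bar{R_{L}}(s) = \sum_{n \leq L}\bar{r(n)} n^{-s}$. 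All factors except $\Phi_{T}$ extend holomorphically in $s = \tfrac{1}{2} + \i t$, and $\Phi_{T}$ is entire in $t$ with rapid decay in horizontal strips. So the integral equals
\begin{align}
	\frac{1}{\i}\int_{(\s)} G(s)^{2} R_{L}(s) \bar{R_{L}}(1 - s)\, \Phi_{T}\l(\frac{s - 1/2}{\i}\r) \sd s
\end{align}
on any line $\s > \tfrac{1}{2}$, provided we justify moving from $\s = \tfrac{1}{2}$ to some $\s_{0} > 1$.

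For this justification we use, under RH, the bound $\log\zeta(\s + \i t) \ll (\log |t|)^{2 - 2\s}/\log\log|t|$ uniformly for $\tfrac{1}{2} \leq \s \leq 2$ and large $|t|$, together with the weaker bound $\ll \log|t|$ near $\s = \tfrac{1}{2}$. The boundary values on $\s = \tfrac{1}{2}$ are defined via the averaged branch, and the integral over the line is a limit from $\s \downarrow \tfrac{1}{2}$ because $\log\zeta$ has only logarithmic singularities, which are integrable. The horizontal segments are negligible because $\Phi_{T}((s - 1/2)/\i) = \exp(-((t - 3T/2) - \i(\s - 1/2))^{2}\log^{2}T/(2T^{2}))$ decays like a Gaussian in $t$, with growth only $\exp(O(\log^{2}T/T^{2}))$ from the imaginary shift. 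I would take $\s_{0} = 1 + 1/\log T$, or simply $\s_{0} = 2$.

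On $\Re s = \s_{0}$ we expand
\begin{align}
	G(s)^{2} R_{L}(s)\bar{R_{L}}(1 - s) = \sum_{k, l, m, n} g_{h}(k) g_{h}(l) r(m) \bar{r(n)}\, n^{-1} (klm)^{-s} n^{s}
\end{align}
and integrate term by term. Shifting each term back to $\Re s = \tfrac{1}{2}$ is harmless for individual exponentials. Each term then contributes
\begin{align}
	\frac{g_{h}(k) g_{h}(l) r(m)\bar{r(n)}}{\sqrt{klmn}}\, \hat{\Phi_{T}}\l(\log\frac{klm}{n}\r) \times (\text{phase}),
\end{align}
where $\hat{\Phi_{T}}(\xi) = \hat{\Phi_{T}}(0)\, e^{-\xi^{2}T^{2}/(2\log^{2}T)}\, e^{-\i\xi\cdot 3T/2}$. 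Rather than using $\Re s = \tfrac{1}{2}$, it is cleaner to compute at $\Re s = \s_{0}$: there the factor is $(n/klm)^{\s_{0}-1/2}$ times a shifted Gaussian transform, and the exponential $e^{(\s_{0}-1/2)\xi}$ is cancelled.

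The diagonal terms $klm = n \leq L$ give exactly $\hat{\Phi_{T}}(0)\sum_{klm \leq L} g_{h}(k) g_{h}(l) r(m)\bar{r(klm)}/(kl\,m)$. Note $\sqrt{klmn} = klm$ and the $n^{-1/2}\cdot n^{1/2}$ bookkeeping matches the $1/(kl)\cdot 1/m$ in the statement. The off-diagonal terms have $klm \neq n$ with $n \leq L \leq T/\log^{2}T$, so $|\log(klm/n)| \geq 1/n \gg \log^{2}T/T$. The Gaussian factor is then at most $e^{-c\log^{2}T}$ per term, which is negligible. Summing over $k, l$ with the convergent weight $(klm)^{-\s_{0}}$ and using $|g_{h}(k)| \leq 2\Lam(k)/\log k$ gives $O(T^{-A}\sum |r(m)r(n)|\ldots)$, bounded via \cref{CscrM-INEsRt} by $O(\hat{\Phi_{T}}(0)T^{-1}\sum_{n \leq L}|r(n)|^{2}/n)$.

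The main obstacle is the off-diagonal tail when $klm$ is huge compared to $n$. There $|\log(klm/n)|$ is large and the Gaussian is tiny, but we must sum $\sum_{k,l}|g_{h}(k)g_{h}(l)|(kl)^{-\s_{0}}$ convergently. This forces $\s_{0} > 1$ and costs only a factor $\zeta(\s_{0})^{2} \ll \log^{2}T$, which the Gaussian decay $e^{-c\log^{2}T}$ easily absorbs. The genuinely delicate point is the rigorous contour shift near $\Re s = \tfrac{1}{2}$ through the logarithmic branch points of $\log\zeta$ at the zeros. I would handle it by integrating on $\Re s = \tfrac{1}{2} + \epsilon$ and letting $\epsilon \to 0$ via dominated convergence, using that $\log\zeta(\tfrac{1}{2} + \epsilon + \i t)$ is locally $L^{2}$-bounded uniformly in $\epsilon$.
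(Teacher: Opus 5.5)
Your overall route matches the paper's: shift to a line $\Re s=\sigma_0>1$, expand $G^2$ as a Dirichlet series, return each term to $\Re s=\tfrac{1}{2}$, keep the diagonal $klm=n$, and discard the off-diagonal terms using the Gaussian decay of $\hat{\Phi_{T}}$ together with $|\log(klm/n)|\gg 1/L\geq (\log T)^{2}/T$. That bookkeeping is correct.

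The gap is in your first claim, that under RH the function $G(s)=\log\zeta(s+\i\tfrac{h}{2})-\log\zeta(s-\i\tfrac{h}{2})$ is holomorphic in $\Re s>\tfrac{1}{2}$. RH removes zeros from that half-plane, but not the pole of $\zeta$ at $s=1$. So $\log\zeta(s\pm\i\tfrac{h}{2})$ has a logarithmic branch point at $s=1\mp\i\tfrac{h}{2}$. With the branch fixed in Section 2 (integrating $\zeta'/\zeta$ horizontally from $+\infty$), there are cuts $\{\sigma\mp\i\tfrac{h}{2}:\tfrac{1}{2}<\sigma\le 1\}$ lying inside the region you sweep between $\Re s=\tfrac{1}{2}$ and $\Re s=\sigma_0$. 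Across each cut $G$ jumps by $\pm 2\pi\i$, so $G^{2}$ jumps by $\pm 4\pi\i G-4\pi^{2}\neq 0$. Hence your asserted equality of the two line integrals is false: the shift produces an extra term. That term is exactly $\mathscr{P}(h;V)$ in \cref{lem_STF_Sv} with $V(t)=(m/n)^{-\i t}\Phi_{T}(t)$; it is $\mathscr{Z}$ there that vanishes under RH. The branch points at the zeros, which you single out as the delicate point, are harmless by comparison. They sit on the boundary line and their cuts run to its left, so your $\epsilon\to 0$ limiting argument handles them.

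The repair is short but must be made. On the cuts, $t=(s-\tfrac{1}{2})/\i=\pm\tfrac{h}{2}-\i y$ with $0\le y\le\tfrac{1}{2}$. There $|\Phi_{T}(t)|\ll e^{-9(\log T)^{2}/8}$, which is essentially $\Phi_{T}(0)$, and $|(m/n)^{-\i t}|\le (m/n)^{1/2}+(n/m)^{1/2}$. Combine this with the integrability of the logarithmic singularity at $s=1$, e.g.\ $\int_{1/2}^{2}|\log\zeta(\sigma+\i t)|^{2}\,d\sigma\ll\log^{2}(|t|+3)$, which the paper uses for exactly this purpose. The extra term is then $O\bigl(\Phi_{T}(0)\sum_{m,n\le L}\frac{|r(m)r(n)|}{\sqrt{mn}}\bigl((m/n)^{1/2}+(n/m)^{1/2}\bigr)\bigr)$, which is $\ll\hat{\Phi_{T}}(0)T^{-1}\sum_{n\le L}|r(n)|^{2}/n$ by \cref{CscrM-INEsRt}. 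With this term accounted for, your proof goes through.

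A minor point: the pointwise bound you quote for $\log\zeta(\sigma+\i t)$ is not valid uniformly down to $\sigma=\tfrac{1}{2}$, since $\log|\zeta|$ is unbounded near the zeros. This does not affect the argument, because on the horizontal segments at heights $X\to\infty$ chosen away from ordinates, any bound polynomial in $\log X$ suffices against the Gaussian.
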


		To prove this proposition, we use the following lemma that is an analogue of the formula by Selberg/Tsang \cite{Ts1986} (see \cref{lem_STF} below).

		\begin{lemma}	\label{lem_STF_Sv}
			Let $V$ be an analytic function in the horizontal strip $\set{z \in \CC}{-\frac{3}{2} \leq \Im z \leq 0}$ satisfying
			$
				\ds{\sup_{-\frac{3}{2} \leq y \leq 0}|V(x + \i y)| \ll (|x| \log^{2}(|x| + 3))^{-1}}.
			$
			For any $v \in \RR$, we have
			\begin{align}
				&\int_{-\infty}^{\infty}\l\{\log\zeta(\tfrac{1}{2} + \i(t + \tfrac{v}{2})) - \log\zeta(\tfrac{1}{2} + \i(t - \tfrac{v}{2}))\r\}^{2}V(t)\sd t\\
				&= \sum_{k, l \geq 2} \frac{g_{v}(k) g_{v}(l)}{\sqrt{kl}} \hat{V}\l(\frac{\log(k l)}{2\pi}\r)
				+ \mathscr{Z}(v; V) + \mathscr{P}(v; V),
			\end{align}
			where
			\begin{align}
				\mathscr{Z}(v; V)
				&= \sum_{\b > \frac{1}{2}}\int_{C(\b - 1/2)}\l\{\log\zeta(\tfrac{1}{2} + \i \gamma + w) - \log\zeta(\tfrac{1}{2} + \i(\gamma - v) + w)\r\}^{2}V(\gamma - \tfrac{v}{2} - \i w)\sd w\\
				&\quad+ \sum_{\b > \frac{1}{2}}\int_{C(\b - 1/2)}\l\{\log\zeta(\tfrac{1}{2} + \i (\gamma + v) + w) - \log\zeta(\tfrac{1}{2} + \i \gamma + w)\r\}^{2}V(\gamma + \tfrac{v}{2} - \i w)\sd w,
			\end{align}
			and
			\begin{align}
				\mathscr{P}(v; V)
				&= \int_{C(1/2)}\l\{\log\zeta(\tfrac{1}{2} + w) - \log\zeta(\tfrac{1}{2} - \i v + w)\r\}^{2}V(- \tfrac{v}{2} - \i w)\sd w\\
				&\quad+ \int_{C(1/2)}\l\{\log\zeta(\tfrac{1}{2} + \i v + w) - \log\zeta(\tfrac{1}{2} + w)\r\}^{2}V(\tfrac{v}{2} - \i w)\sd w.
			\end{align}
			Here, $C(\a)$ denotes the contour starting from the origin, proceeding along the real axis to $\a$, encircling $\a$ once counterclockwise, and returning to the origin along the same path.
		\end{lemma}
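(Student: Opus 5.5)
Following Selberg/Tsang, the plan is to shift the line of integration from $\Im t = 0$ down to $\Im t = -3/2$, i.e. to move $\tfrac12 + \i t$ to the right of the critical line, where $\log\zeta$ is given by its Dirichlet series. Write $F(z) = \log\zeta(\tfrac12 + \i(z + \tfrac v2)) - \log\zeta(\tfrac12 + \i(z - \tfrac v2))$. For $z = t - \i y$ with $0 \le y \le 3/2$, the arguments become $\tfrac12 + y + \i(t \pm \tfrac v2)$, so $F$ is analytic in the strip except along horizontal branch cuts. These cuts emanate to the left (in $\Re s$) from each zero $\rho = \b + \i\gamma$ with $\b > \tfrac12$ and from the pole at $s = 1$, and are placed so that $\log\zeta(s)$ is defined by integrating $\zeta'/\zeta$ from $+\infty$, consistent with the branch fixed in Section 2. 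In the $z$-variable, a zero $\rho$ produces a cut from $z = \gamma \mp \tfrac v2$ down to $z = \gamma \mp \tfrac v2 - \i(\b - \tfrac12)$, and the pole produces a cut from $\mp\tfrac v2$ down to $\mp \tfrac v2 - \tfrac{\i}{2}$. All these lie inside the strip, since $\b \le 1$.

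First, I apply Cauchy's theorem to $F(z)^2 V(z)$ on the rectangle with vertices $\pm X$ and $\pm X - \tfrac{3\i}{2}$, indented around each cut by a keyhole. Each keyhole contributes an integral over the contour $C(\b - \tfrac12)$ (respectively $C(1/2)$). Substituting $z = \gamma \mp \tfrac v2 - \i w$ turns $F(z)$ into exactly the brackets in $\mathscr{Z}(v;V)$ and $\mathscr{P}(v;V)$, up to relabeling which $\log\zeta$ carries the singularity. On the two sides of a cut the function $\log\zeta$ differs by $\pm 2\pi\i$ times the multiplicity, so these contour integrals are the correct bookkeeping. The orientation and the sign convention of $C(\a)$ have to be checked carefully here. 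On the bottom edge, $\Re = 2 > 1$, so
\[
F(t - \tfrac{3\i}{2}) = \sum_{k \ge 2} g_v(k) k^{-2 - \i t}.
\]
Squaring and integrating termwise against $V(t - \tfrac{3\i}{2})$, which is justified by absolute convergence, gives $\sum_{k,l} g_v(k)g_v(l)(kl)^{-2}\int V(t - \tfrac{3\i}{2})(kl)^{-\i t}\sd t$. Shifting this back to the real line, now harmless because $(kl)^{-\i z}$ is entire, yields $(kl)^{-1/2}\hat V(\log(kl)/2\pi)$.

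Second, I handle the vertical sides and convergence. On $|x| = X$, the functions $\log\zeta$ in the strip are $O(\log X)$ away from cuts, or at worst $O(\log^2 X)$ by standard bounds, while $V \ll (X\log^2 X)^{-1}$. The vertical edges then vanish if $X$ is chosen between ordinates of zeros, and I let $X \to \infty$ along such a sequence. The same decay, combined with $N(T) \ll T\log T$ and boundedness of $\log\zeta$ on the small keyholes, makes the sum over zeros in $\mathscr{Z}$ converge absolutely. Near a cut, $\log\zeta$ has only a logarithmic singularity, so $F^2$ is integrable and the keyhole circles shrink to zero contribution.

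The main obstacle is the second step: justifying absolute convergence of $\sum_{\b > 1/2}$ and the vanishing of the vertical sides. Under RH, $\mathscr{Z}$ is empty, so only the convergence of the original real-line integral and of the bottom-edge series really matter. Still, I would prove the unconditional statement via the uniform bound $\log\zeta(\s + \i t) \ll \log^2(|t| + 3)$, valid for $\s \ge \tfrac12$ off the cuts with suitable $t$. This is exactly why the hypothesis on $V$ includes the factor $\log^2$.
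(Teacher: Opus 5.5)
Your proposal matches the paper's proof, with more of the bookkeeping written out: shift from $\Im z=0$ to $\Im z=-\tfrac32$, collect the keyhole contributions around the horizontal cuts (these give $\mathscr Z$ and $\mathscr P$), expand the Dirichlet series on $\Re s=2$, and move each $(kl)^{-\i z}$ term back to the real line. On the two points you left open: the orientation check (each slit is traversed clockwise in the $z$-plane, and $dz=-\i\,dw$) puts a factor $\i$ in front of the $C(\cdot)$-integrals relative to the printed statement, as the $+2\pi\int_0^{\b-1/2}$ terms of \cref{lem_STF} confirm, but this is harmless downstream because only $\mathscr Z=0$ under RH and a size bound for $\mathscr P$ are used; and you do not need absolute convergence of the sum over zeros (your appeal to boundedness of $\log\zeta$ on the keyholes fails, since $\log\zeta$ has a logarithmic singularity at each tip), because the contour argument defines $\mathscr Z$ as the limit along your sequence $X\to\infty$, which exists since all the other terms converge.
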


		\begin{proof}
			Shifting the contour of integration, we find that
			\begin{align}
				&\int_{-\infty}^{\infty}\l\{\log\zeta(\tfrac{1}{2} + \i(t + \tfrac{v}{2})) - \log\zeta(\tfrac{1}{2} + \i(t - \tfrac{v}{2}))\r\}^{2}V(t)\sd t\\
				&= \int_{-\infty}^{\infty}\l\{\log\zeta(2 + \i(t + \tfrac{v}{2})) - \log\zeta(2 + \i(t - \tfrac{v}{2}))\r\}^{2}V(t - \tfrac{3\i}{2} )\sd t
				+ \mathscr{Z}(v; V) + \mathscr{P}(v; V).
			\end{align}
			By the Dirichlet series representation, the first integral on the right hand side is equal to
			\begin{align}
				\sum_{k, l \geq 2}\frac{g_{v}(k) g_{v}(l)}{(kl)^{2}} \int_{-\infty}^{\infty} (k l)^{-\i t}V(t - \tfrac{3\i}{2} )\sd t
			\end{align}
			Moving the path of the integral again, we obtain
			\begin{align}
				\int_{-\infty}^{\infty}\l\{\log\zeta(2 + \i(t + \tfrac{v}{2})) - \log\zeta(2 + \i(t - \tfrac{v}{2}))\r\}^{2}V(t - \tfrac{3\i}{2} )\sd t
				&= \sum_{k, l \geq 2}\frac{g_{v}(k) g_{v}(l)}{\sqrt{kl}} \hat{V}\l(\frac{\log(kl)}{2\pi}\r),
			\end{align}
			which completes the proof of this lemma.
		\end{proof}

		\begin{proof}[Proof of \cref{AFscrJDP}]
			We expand the Dirichlet polynomial $R_{L}$ to obtain
			\begin{align}	\label{p3SPStAbsLRZ}
				&\mathcal{J}(T, h; R_{L})\\
				&= \sum_{m, n \leq L}\frac{r(m) \bar{r(n)}}{\sqrt{mn}}
				\int_{-\infty}^{\infty}\l\{ \log\zeta(\tfrac{1}{2} + \i(t + \tfrac{h}{2})) - \log\zeta(\tfrac{1}{2} + \i(t - \tfrac{h}{2})) \r\}^{2} \l( \frac{m}{n} \r)^{-\i t} \Phi_{T}(t)\sd t.
			\end{align}
			Following the argument of (9.9.5) in \cite{THB1986}, we have $\int_{1/2}^{2}|\log\zeta(\s + \i t)|^{2} \sd\s \ll (\log(|t| + 3))^{2}$.
			We apply \cref{lem_STF_Sv} to the integral in \cref{p3SPStAbsLRZ} and then estimate the contribution coming from $\mathscr{P}$ by using the previous estimate to find that \cref{p3SPStAbsLRZ} is
			\begin{align}	\label{p1SPStAbsLRZ}
				&\sum_{m, n \leq L}\frac{r(m) \bar{r(n)}}{\sqrt{mn}} \sum_{k, l \geq 2} \frac{g_{h}(k) g_{h}(l)}{\sqrt{k l}} \hat{\Phi_{T}}\l(\frac{1}{2\pi}\log\l( \frac{k l m}{n} \r)\r)\\
				&+ O\l( \sum_{m, n \leq L}\frac{|r(m)r(n)|}{\sqrt{mn}}\l\{\l( \frac{m}{n} \r)^{1/2} + \l( \frac{n}{m} \r)^{1/2}\r\} \Phi_{T}(0) \r)
			\end{align}
			under the Riemann Hypothesis.
			Note that the contribution from $\mathscr{Z}$ vanishes under the Riemann Hypothesis.
			This $O$-term is $\ll \hat{\Phi_{T}}(0)\frac{1}{T}\sum_{n \leq L}|r(n)|^{2}n^{-1}$ for $L \leq T / (\log T)^{2}$ by \cref{CscrM-INEsRt}.
			Since it holds that
			\begin{align}	\label{EQhPhi}
				\hat{\Phi_{T}}(\xi) = \sqrt{2\pi} \frac{T}{\log T} e^{-2\pi \i \xi \frac{3}{2}T} \Phi\l( 2\pi\frac{T}{\log T} \xi \r)
				= \hat{\Phi_{T}}(0) e^{-3\pi \i \xi T} \Phi\l( 2\pi\frac{T}{\log T} \xi \r),
			\end{align}
			we have
			\begin{align}	\label{ESThPhi}
				\hat{\Phi_{T}}\l( \frac{1}{2\pi}\log\l( \frac{M}{N} \r) \r)
				\ll \Phi\l( \frac{T}{2 L \log T} \r)
			\end{align}
			for any large $L$ and any $M, N \in \ZZ_{\geq 1} \cap [1, L]$ with $M \neq N$.
			This estimate ensures that the non-diagonal terms are negligible when $L \leq T / (\log T)^{2}$.
			Hence, the main term of \cref{p1SPStAbsLRZ} is
			\begin{align}
				\hat{\Phi_{T}}(0) \sum_{klm \leq L}\frac{r(m) \bar{r(klm)}}{m} \frac{g_{h}(k) g_{h}(l)}{kl}
				+ O\l( \hat{\Phi_{T}}(0)\frac{1}{T}\sum_{n \leq L}\frac{|r(n)|^{2}}{n} \r).
			\end{align}
			This completes the proof of \cref{AFscrJDP}.
		\end{proof}

	\subsection{Lower bound of $\mathcal{I}(T, h; R_{L})$}
		In this subsection, we use the approximator $A_{X}$ defined in \cref{def_AX} to establish the following proposition.

		\begin{proposition}	\label{RMSSt}
			Assume the Riemann Hypothesis.
			For any arithmetic functions $a, r$, any large $L, T$ satisfying $L \leq T / (\log T)^{2}$, and any $0 < h \leq 1$, we have
			\begin{align}
				\mathcal{I}(T, h; R_{L})
				&\geq \hat{\Phi_{T}}(0)\Re\sum_{\substack{km, ln \leq L\\ km = ln}} \frac{(2g_{h}(k) - a(k)) \bar{a(l)}}{\sqrt{kl}} \frac{r(m) \bar{r(n)}}{\sqrt{mn}}\\
				&\quad- O\l( \frac{1}{T}\sum_{k \leq L}\frac{|a(k)|^{2}}{k}\sum_{n \leq L}\frac{|r(n)|^{2}}{n} \r).
			\end{align}
		\end{proposition}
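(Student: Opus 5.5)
The plan is to follow the inequality \cref{KINEDI} from \cref{Sec:APSG} literally and then evaluate the two resulting terms with the tools already developed. Write $D(t) = \log\zeta(\tfrac{1}{2} + \i(t + \tfrac{h}{2})) - \log\zeta(\tfrac{1}{2} + \i(t - \tfrac{h}{2}))$ and
\begin{align}
	W(t) = \sum_{n \leq L} A_{L/n}(\tfrac{1}{2} + \i t) r(n) n^{-1/2 - \i t}.
\end{align}
Since $|D(t) R_{L}(\tfrac{1}{2} + \i t)|^{2} \geq 2\Re\bigl(D(t) R_{L}(\tfrac{1}{2} + \i t)\bar{W(t)}\bigr) - |W(t)|^{2}$ pointwise and $\Phi_{T} \geq 0$, we get
\begin{align}
	\mathcal{I}(T, h; R_{L}) \geq 2\Re \mathcal{I}_{2}(T, h; R_{L}, A) - \mathcal{I}_{1}(T; R_{L}, A),
\end{align}
with $\mathcal{I}_{1}, \mathcal{I}_{2}$ as in \cref{def_calI1}, \cref{def_calI2}. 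The proof then reduces to two asymptotic evaluations: a mean value of a pure Dirichlet polynomial for $\mathcal{I}_{1}$, and a Selberg/Tsang-type formula for $\mathcal{I}_{2}$.

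\textbf{The term $\mathcal{I}_{1}$.} Note that $W(t) = \sum_{N \leq L} b(N) N^{-1/2 - \i t}$ with $b(N) = \sum_{kn = N} a(k) r(n)$, since $k \leq L/n$ is exactly $kn \leq L$. By \cref{lem5MSMA} applied to the coefficients $b$,
\begin{align}
	\mathcal{I}_{1} = \hat{\Phi_{T}}(0)\bigl(1 + O(T^{-1})\bigr)\sum_{N \leq L}\frac{|b(N)|^{2}}{N}.
\end{align}
The main term is $\hat{\Phi_{T}}(0)\sum_{km = ln \leq L} a(k)\bar{a(l)} r(m)\bar{r(n)}/\sqrt{klmn}$. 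For the error I would use Cauchy--Schwarz, giving
\begin{align}
	\sum_{N \leq L}\frac{|b(N)|^{2}}{N} \ll \Bigl(\sum_{k \leq L} \frac{|a(k)|^{2}}{k}\Bigr)\Bigl(\sum_{n \leq L}\frac{|r(n)|^{2}}{n}\Bigr)
\end{align}
up to a divisor factor. If that factor is a problem, I would instead bound the error directly through the off-diagonal estimate \cref{ESThPhi}, as in the proof of \cref{lem5MSMA}.

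\textbf{The term $\mathcal{I}_{2}$.} Expand $\bar{A_{L/n}}$, so that $\mathcal{I}_{2}$ becomes a finite sum over $m, n \leq L$ and $l \leq L/n$ of $r(m)\bar{r(n)}\bar{a(l)}/\sqrt{mnl}$ times
\begin{align}
	\int_{-\infty}^{\infty} D(t)\, (lm/n)^{-\i t}\Phi_{T}(t)\sd t.
\end{align}
This is a \emph{linear} analogue of \cref{lem_STF_Sv}: shift to $\Re s = 2$, expand $D$ as $\sum_{k \geq 2} g_{h}(k) k^{-s}$, and shift back. Under RH the zero contributions $\mathscr{Z}$ vanish, and the pole contributions $\mathscr{P}$ are $\ll (m/n)^{\pm 1/2}$ times a factor involving $l$ and $\Phi_{T}(0)$-sized quantities, exactly as in the proof of \cref{AFscrJDP}. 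This leaves
\begin{align}
	\sum_{k} g_{h}(k) k^{-1/2}\,\hat{\Phi_{T}}\bigl(\tfrac{1}{2\pi}\log(klm/n)\bigr).
\end{align}
By \cref{ESThPhi}, only $km = ln$ survives; off-diagonal terms are negligible because $km, ln \leq L \leq T/(\log T)^{2}$. Note that $km = ln \leq L$ forces $k \leq L$, so the sum over $k$ is automatically finite. Hence
\begin{align}
	\mathcal{I}_{2} = \hat{\Phi_{T}}(0)\sum_{km = ln \leq L} \frac{g_{h}(k)\bar{a(l)}}{\sqrt{kl}}\frac{r(m)\bar{r(n)}}{\sqrt{mn}} + \text{error}.
\end{align}
Combining this with $\mathcal{I}_{1}$ gives exactly the claimed main term with $(2g_{h}(k) - a(k))\bar{a(l)}$.

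\textbf{Main obstacle.} The hardest step will be controlling the errors in $\mathcal{I}_{2}$ in the stated shape $T^{-1}\sum|a(k)|^{2}/k\sum|r(n)|^{2}/n$. This covers the $\mathscr{P}$ terms, now weighted by $\bar{a(l)}$ with $l$ up to $L$, and the tails of the off-diagonal sums over $k$ in the Gaussian-decay region. I would first bound the $\mathscr{P}$ contributions by $\Phi_{T}$ evaluated at roughly $T$, which gives $e^{-c(\log T)^{2}}$ decay, and then absorb all $a$- and $r$-sums using the crude bounds of \cref{CscrM-INEsRt} together with Cauchy--Schwarz in $l$. The resulting $\log T/T$-type losses from $g_{h}$ should be harmless, since the statement only needs an inequality.
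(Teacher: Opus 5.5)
Your proposal is correct and matches the paper's own proof. The paper uses the same ingredients: the pointwise inequality $|z|^{2} \geq 2\Re(z\bar{w}) - |w|^{2}$ giving $\mathcal{I} \geq 2\Re\mathcal{I}_{2} - \mathcal{I}_{1}$, then a diagonal/off-diagonal split via \cref{EQhPhi} and \cref{ESThPhi} for $\mathcal{I}_{1}$. For $\mathcal{I}_{2}$ it uses the linear Selberg--Tsang formula of \cref{lem_STF}, with RH removing the zero terms, the pole terms of size $\Phi_{T}(0) = e^{-9(\log T)^{2}/8}$, and Gaussian decay leaving only $km = ln$.

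Two small corrections:
\begin{enumerate}
\item Expanding $\bar{A_{L/n}(\tfrac{1}{2} + \i t)}$ produces $(m/(ln))^{-\i t}$, not $(lm/n)^{-\i t}$. Your formula with $\log(klm/n)$ would give the diagonal $klm = n$; the diagonal $km = ln$ that you then use is the correct one.
\item For $\mathcal{I}_{1}$, your fallback is the route you actually need. Quoting \cref{lem5MSMA} with its relative error $O(1/T)$ loses a factor $\hat{\Phi_{T}}(0) \asymp T/\log T$ against the stated error term, on top of the divisor factor. Bounding the off-diagonal terms directly instead saves $e^{-c(\log T)^{2}}$, which absorbs the factor $L^{2}$ coming from Cauchy--Schwarz as in \cref{CscrM-INEsRt}.
\end{enumerate}
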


		By \cref{KINEDI}, it suffices to analyze $\mathcal{I}_{1}(T; R_{L}, A)$ and $\mathcal{I}_{2}(T, h; R_{L}, A)$ defined in \cref{def_calI1} and \cref{def_calI2}, respectively.
		Here, we show the asymptotic formulas for $\mathcal{I}_{1}(T; R_{L}, A)$ and $\mathcal{I}_{2}(T, h; R_{L}, A)$.

		\begin{lemma}	\label{EST_TDPR}
			For any arithmetic functions $a, r$, any large $L, T$ satisfying $L \leq T / (\log T)^{2}$, and any $0 < h \leq 1$, we have
			\begin{align}
				\mathcal{I}_{1}(T; R_{L}, A)
				&= \hat{\Phi_{T}}(0)\sum_{\substack{km, ln \leq L\\ km = ln}} \frac{a(k) \bar{a(l)}}{\sqrt{kl}} \frac{r(m) \bar{r(n)}}{\sqrt{mn}}
				+ O\l( \frac{1}{T}\sum_{k \leq L}\frac{|a(k)|^{2}}{k}\sum_{n \leq L}\frac{|r(n)|^{2}}{n} \r).
			\end{align}
		\end{lemma}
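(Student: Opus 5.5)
The plan is to expand the square in the definition of $\mathcal{I}_{1}(T; R_{L}, A)$ and reduce to a mean value of a single Dirichlet polynomial. Writing $A_{L/n}(\tfrac12+\i t)r(n)n^{-1/2-\i t} = \sum_{k \leq L/n} a(k) r(n) (kn)^{-1/2}(kn)^{-\i t}$, the inner sum over $n$ becomes
\begin{align}
	B(t) \ceq \sum_{n \leq L}\sum_{k \leq L/n}\frac{a(k) r(n)}{\sqrt{kn}}(kn)^{-\i t}
	= \sum_{N \leq L} \frac{b(N)}{\sqrt{N}} N^{-\i t},
	\quad b(N) = \sum_{kn = N} a(k) r(n).
\end{align}
This is a Dirichlet polynomial of length $L$, so $\mathcal{I}_{1}(T; R_{L}, A) = \int_{\RR}|B(t)|^{2}\Phi_{T}(t)\sd t$. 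I will not simply quote \cref{lem5MSMA} with $r$ replaced by $b$: that would give an error term $\frac{1}{T}\sum_{N\leq L}|b(N)|^{2}/N$, which is not of the stated product form. Instead I will expand directly.

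Expanding $|B(t)|^{2}$ and integrating term by term gives
\begin{align}
	\sum_{\substack{km, ln \leq L}} \frac{a(k)\bar{a(l)}}{\sqrt{kl}}\frac{r(m)\bar{r(n)}}{\sqrt{mn}}\, \hat{\Phi_{T}}\l(\frac{1}{2\pi}\log\frac{km}{ln}\r).
\end{align}
The terms with $km = ln$ give exactly the stated main term, since the Fourier factor is $\hat{\Phi_{T}}(0)$.

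For the off-diagonal terms, $km \neq ln$ are distinct integers in $[1, L]$, so \cref{ESThPhi} gives $\hat{\Phi_{T}}(\cdot) \ll \Phi(T/(2L\log T)) \leq \Phi(\tfrac12\log T) = T^{-(\log T)/8}$, which is superpolynomially small. The off-diagonal sum is then bounded by this factor times
\begin{align}
	\Big(\sum_{k\leq L}\frac{|a(k)|}{\sqrt{k}}\Big)^{2}\Big(\sum_{n\leq L}\frac{|r(n)|}{\sqrt n}\Big)^{2}
	\leq L^{2}\sum_{k \leq L}\frac{|a(k)|^{2}}{k}\sum_{n \leq L}\frac{|r(n)|^{2}}{n},
\end{align}
using Cauchy--Schwarz as in \cref{CscrM-INEsRt}. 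Since $L \le T$, the prefactor $L^{2}T^{-(\log T)/8}$ is far smaller than $1/T$, which yields the stated $O$-term.

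There is no real obstacle here; the only care needed is to keep the error in the product form $\sum|a|^{2}/k \cdot \sum |r|^{2}/n$. That is exactly why I use the crude $\ell^{1}$ bound with the superpolynomial decay, rather than a Montgomery--Vaughan-type bound for the convolution $b$. The hypothesis $h$ plays no role.
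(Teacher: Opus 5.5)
Your proposal is correct and follows the paper's argument exactly: expand both Dirichlet polynomials into the quadruple sum weighted by $\hat{\Phi_{T}}\bigl(\frac{1}{2\pi}\log\frac{km}{ln}\bigr)$, keep the diagonal $km = ln$ as the main term, and discard the off-diagonal terms using \cref{EQhPhi} and \cref{ESThPhi}. Your explicit Cauchy--Schwarz bound $L^{2}\sum_{k \leq L}|a(k)|^{2}/k \sum_{n \leq L}|r(n)|^{2}/n$, multiplied by the superpolynomially small factor, supplies the detail the paper leaves implicit; it also easily absorbs the factor $\hat{\Phi_{T}}(0) \asymp T/\log T$ that \cref{ESThPhi} does not display.
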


		\begin{lemma}	\label{CSthCDP}
			Assume the Riemann Hypothesis.
			For any arithmetic functions $a, r$, any large $L, T$ satisfying $L \leq T / (\log T)^{2}$, and any $0 < h \leq 1$, we have
			\begin{align}
				\mathcal{I}_{2}(T, h; R_{L}, A)
				&= \hat{\Phi_{T}}(0)\sum_{\substack{km, ln \leq L\\ km = ln}} \frac{g_{h}(k) \bar{a(l)}}{\sqrt{kl}} \frac{r(m) \bar{r(n)}}{\sqrt{mn}}
				+ O\l( \frac{1}{T}\sum_{k \leq L}\frac{|a(k)|^{2}}{k}\sum_{n \leq L}\frac{|r(n)|^{2}}{n} \r).
			\end{align}
		\end{lemma}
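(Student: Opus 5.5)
We prove \cref{CSthCDP}, the asymptotic formula for $\mathcal{I}_{2}(T, h; R_{L}, A)$, by following the proof of \cref{AFscrJDP}, with the square of the difference of logarithms replaced by a linear factor. We expand $\bar{A_{L/n}(\tfrac{1}{2} + \i t)} = \sum_{l \leq L/n}\bar{a(l)}\, l^{-1/2} l^{\i t}$ and write
\begin{align}
	\mathcal{I}_{2}(T, h; R_{L}, A)
	= \sum_{m, n \leq L}\frac{r(m)\bar{r(n)}}{\sqrt{mn}}\sum_{l \leq L/n}\frac{\bar{a(l)}}{\sqrt{l}}
	\int_{-\infty}^{\infty}\l\{\log\zeta(\tfrac{1}{2} + \i(t + \tfrac{h}{2})) - \log\zeta(\tfrac{1}{2} + \i(t - \tfrac{h}{2}))\r\} V_{m,n,l}(t)\sd t,
\end{align}
where $V_{m,n,l}(t) = (ln/m)^{\i t}\Phi_{T}(t)$. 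The factor $(ln/m)^{\i t}$ grows at most like $L^{3/2}$ in the strip $-\tfrac{3}{2} \leq \Im t \leq 0$, and this only affects the error terms.

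For the inner integral we use the linear analogue of \cref{lem_STF_Sv}. We shift the line of integration to $\Re s = 2$. Under RH, the contribution $\mathscr{Z}$ from zeros vanishes. The contribution $\mathscr{P}$ from the pole is bounded by $\int_{1/2}^{2}|\log\zeta(\s + \i t)|\sd\s \ll \log(|t|+3)$ together with $\Phi_{T}(0)$-type decay, since $\Phi_{T}$ near $t = \pm h/2$ is of size $e^{-c(\log T)^{2}}$. We then expand $\log\zeta(s+\i h/2)-\log\zeta(s-\i h/2) = \sum_{k\geq 2} g_{h}(k)k^{-s}$ and shift back, which gives
\begin{align}
	\sum_{k \geq 2}\frac{g_{h}(k)}{\sqrt{k}}\,\hat{\Phi_{T}}\l(\frac{1}{2\pi}\log\frac{km}{ln}\r).
\end{align}
The pole contribution, summed over $m, n, l$ against $|r(m)r(n)a(l)|/\sqrt{mnl}$ with the extra $L^{O(1)}$ factors, is negligible by Cauchy--Schwarz and \cref{CscrM-INEsRt}. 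It is comfortably within $\frac{1}{T}\sum_{k\le L}|a(k)|^{2}/k\sum_{n\le L}|r(n)|^{2}/n$.

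Next we use the explicit formula \cref{EQhPhi}. When $km = ln$ the value is exactly $\hat{\Phi_{T}}(0)$, and these diagonal terms (which force $km = ln \leq L$) give the stated main term.

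The main obstacle is the off-diagonal terms $km \neq ln$, because $k$ is unbounded, unlike the situation in \cref{ESThPhi}. We split at $k \leq L^{2}$. For $k \leq L^{2}$ we have $km, ln \leq L^{3}$ and $|\log(km/ln)| \gg L^{-3}$, so $\Phi(2\pi T\xi/\log T)$ is superpolynomially small. Here we would first try to exploit that $L \leq T/(\log T)^{2}$; if the crude spacing bound is too weak, we instead use the sharper spacing $|\log(km/ln)| \gg 1/(ln)$ when $ln \leq L$. For $k > L^{2}$ we have $|\log(km/ln)| \geq \log(k/L^{2}) $ roughly $\gg \log k$-sized once $k \geq L^{3}$, which gives Gaussian decay $\exp(-c(T\log k/\log T)^{2})$ summable against $|g_{h}(k)|/\sqrt{k} \ll \sqrt{k}$. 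The intermediate range $L^{2} < k < L^{3}$ is handled with the same spacing bound. All off-diagonal terms together are then $O(T^{-A})$ times $\sum|r(m)r(n)a(l)|/\sqrt{mnl}$. By Cauchy--Schwarz this is at most $L^{O(1)}T^{-A}\sum|a|^{2}/k\sum|r|^{2}/n$, which is absorbed in the error term and completes the proof.
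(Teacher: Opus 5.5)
Your proposal is correct and is essentially the paper's proof: expand $R_{L}$ and $\bar{A_{L/n}}$, apply the linear Selberg--Tsang formula (\cref{lem_STF}) to $V(t) = (ln/m)^{\i t}\Phi_{T}(t)$, drop the zero terms under RH, drop the pole terms because $\Phi_{T}$ is of size $e^{-c(\log T)^{2}}$ near the origin, and keep only the diagonal $km = ln$ via \cref{EQhPhi}. Your off-diagonal analysis is more explicit than the paper's appeal to \cref{ESThPhi} (which is stated only for $M, N \leq L$), but the $L^{-3}$ spacing gives no decay when $L$ is near $T/(\log T)^{2}$, so you must use $|\log(km/ln)| \geq 1/(2ln) \geq 1/(2L)$, which holds for every $k$ because only $ln \leq L$ enters and yields the factor $e^{-(\log T)^{2}/8}$, while $\log(km/ln) \geq \log(k/L)$ is needed only for summability in $k$ (note also that Cauchy--Schwarz actually produces $(\sum_{k \leq L}|a(k)|^{2}/k)^{1/2}$ in the error term, a homogeneity slip already present in the paper's statement and harmless in its application).
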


		Using these lemmas and inequality \cref{KINEDI}, we obtain \cref{RMSSt}, noting that the main term of \cref{EST_TDPR} is real.

	\subsection{Proof of \cref{EST_TDPR}}
		Expanding the Dirichlet polynomials $R_{L}$ and $A_{X}$, we write
		\begin{align}
			\mathcal{I}_{1}(T; R_{L}, A)
			&= \sum_{km, ln \leq L} \frac{a(k) \bar{a(l)}}{\sqrt{kl}} \frac{r(m) \bar{r(n)}}{\sqrt{mn}} \int_{-\infty}^{\infty} \l( \frac{k m}{l n} \r)^{-\i t} \Phi_{T}(t)\sd t\\
			&= \sum_{km, ln \leq L} \frac{a(k) \bar{a(l)}}{\sqrt{kl}} \frac{r(m) \bar{r(n)}}{\sqrt{mn}} \hat{\Phi_{T}}\l( \frac{1}{2\pi}\log\l( \frac{km}{ln} \r) \r).
		\end{align}
		Applying \cref{EQhPhi} and \cref{ESThPhi}, we obtain
		\begin{align}
			\mathcal{I}_{1}(T; R_{L}, A)
			&= \hat{\Phi_{T}}(0)\sum_{\substack{km, ln \leq L\\ km = ln}} \frac{a(k) \bar{a(l)}}{\sqrt{kl}} \frac{r(m) \bar{r(n)}}{\sqrt{mn}}
			+ O\l( \frac{1}{T}\sum_{k \leq L}\frac{|a(k)|^{2}}{k}\sum_{n \leq L}\frac{|r(n)|^{2}}{n} \r).
		\end{align}
		This completes the proof of \cref{EST_TDPR}.
		\qed

	\subsection{Proof of \cref{CSthCDP}}
		To prove \cref{CSthCDP}, we require the following lemma.

		\begin{lemma}	\label{lem_STF}
			Let $V$ be an analytic function in the horizontal strip $\set{z \in \CC}{-\frac{3}{2} \leq \Im z \leq 0}$ satisfying
			$
				\ds{\sup_{-\frac{3}{2} \leq y \leq 0}|V(x + \i y)| \ll (|x| \log^{2}(|x| + 3))^{-1}}.
			$
			For any $v \in \RR$, we have
			\begin{align}
				&\int_{-\infty}^{\infty}\log\zeta(\tfrac{1}{2} + \i(t + v))V(t)\sd t\\
				&= \sum_{k = 2}^{\infty}\frac{\Lam(k)}{k^{\frac{1}{2} + \i v}\log{k}}\hat{V}\l( \frac{\log{k}}{2\pi} \r)
				+ 2\pi \sum_{\b > \frac{1}{2}}\int_{0}^{\b - \frac{1}{2}}V(\gamma - v - \i\s)\sd\s
				- 2\pi \int_{0}^{\frac{1}{2}}V(- v - \i\s)\sd\s.
			\end{align}
			Here, $\hat{V}$ is the Fourier transform of $V$ defined by $\hat{V}(z) = \int_{-\infty}^{\infty}V(x)e^{-2\pi \i x z}\sd x$.
		\end{lemma}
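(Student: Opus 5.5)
We prove \cref{lem_STF} by contour shifting, exactly parallel to the proof of \cref{lem_STF_Sv} but keeping track of the residue-type contributions explicitly. By translating $t \mapsto t - v$ it suffices to treat the function $W(t) = V(t - v)$ and the integral $\int_{-\infty}^{\infty}\log\zeta(\tfrac{1}{2} + \i t)W(t)\sd t$, where the line $\Re s = \tfrac{1}{2}$ is written as $s = \tfrac{1}{2} + \i t$. With $s = \tfrac12 + \i t$, the integrand is $\log\zeta(s) W(-\i(s - \tfrac12))$, and moving from $\Re s = \tfrac12$ to $\Re s = 2$ corresponds to $t \mapsto t - \tfrac{3}{2}\i$, i.e. into the strip where $V$ is analytic. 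The decay hypothesis $|V(x + \i y)| \ll (|x|\log^{2}(|x|+3))^{-1}$, combined with $\log\zeta(\s + \i t) \ll \log(|t|+3)$ on average (and the bound $\int_{1/2}^{2}|\log\zeta(\s+\i t)|\sd\s \ll \log(|t|+3)$ used already in the proof of \cref{AFscrJDP}), makes the horizontal segments at height $\pm X$ vanish as $X \to \infty$ along a suitable sequence avoiding ordinates of zeros.

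The key step is to account for the branch cuts. With the branch fixed in \cref{Sec:APSG}, $\log\zeta(\s + \i t)$ is analytic in $\tfrac12 \le \s \le 2$ except on the horizontal cuts $\{\s + \i\gamma : \tfrac12 \le \s < \b\}$ for zeros $\rho = \b + \i\gamma$ with $\b > \tfrac12$, and the cut $\{\s : \tfrac12 \le \s \le 1\}$ from the pole at $s = 1$ (together with the real-axis convention). Across a cut from a zero, $\log\zeta$ jumps by $2\pi\i$ (the argument increases by $2\pi$ going around a zero); across the cut from the pole, it jumps by $-2\pi\i$. Hence the difference between the integral over $\Re s = \tfrac12$ and the integral over $\Re s = 2$ equals the sum over cuts of $\pm 2\pi \i \int (\text{$W$ along the cut})\sd s$. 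Parametrizing the cut of the zero by $s = \tfrac12 + \s + \i\gamma$, $0 \le \s \le \b - \tfrac12$, we have $-\i(s - \tfrac12) = \gamma - \i\s$, producing $2\pi\int_{0}^{\b-1/2}V(\gamma - v - \i\s)\sd\s$. The pole cut, with $s = \tfrac12 + \s$, $0 \le \s \le \tfrac12$, gives $-2\pi\int_0^{1/2}V(-v - \i\s)\sd\s$. The factor $\i$ from $\sd t = -\i \sd s$ must be checked against the $2\pi\i$ jump, and the orientation fixes the signs. Equivalently, this is the $\mathscr{Z}$, $\mathscr{P}$ bookkeeping of \cref{lem_STF_Sv}: for the linear function $\log\zeta$, the keyhole integral over $C(\a)$ reduces to the jump $2\pi\i$ times a line integral, and multiplicity is handled by counting zeros with multiplicity.

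On the line $\Re s = 2$, we expand $\log\zeta(2 + \i(t+v)) = \sum_{k\ge2}\Lam(k)(\log k)^{-1}k^{-2-\i(t+v)}$, which converges absolutely, and interchange sum and integral. This gives terms $\Lam(k)k^{-2-\i v}(\log k)^{-1}\int V(t - \tfrac32\i)k^{-\i t}\sd t$. Shifting this single integral back to the real line (valid by analyticity and decay) yields $k^{3/2}\hat V(\log k/2\pi)$, hence the main term $\sum_k \Lam(k)k^{-1/2-\i v}(\log k)^{-1}\hat V(\log k/2\pi)$.

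The main obstacle is the careful justification of the contour shift. This includes the vanishing of the horizontal edges, which requires choosing heights $X$ with $|\log\zeta| \ll \log X$ on the whole segment (standard, via a height at distance $\gg 1/\log X$ from every ordinate). It also includes convergence of the sum over zeros, which follows from the decay of $V$ and $N(T) \ll T\log T$, since $\sum_\gamma (|\gamma|\log^2|\gamma|)^{-1}$ converges. Getting the signs and orientations of the jump terms right is the remaining delicate point.
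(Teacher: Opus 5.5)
Your proposal follows the same route as the paper, which simply cites Selberg's formula (2.14) of Tsang, itself proved by this contour shift. Your bookkeeping is correct. Put $G(s)=\log\zeta(s)\,V(-\mathrm{i}(s-\frac12)-v)$. The left-hand side equals $-\mathrm{i}\int_{(1/2)}G(s)\,\mathrm{d}s$. Across the cut of each zero (counted with multiplicity) $\log\zeta$ jumps by $+2\pi\mathrm{i}$, upper minus lower boundary value, and across the cut of the pole it jumps by $-2\pi\mathrm{i}$. Cauchy's theorem on the slit rectangle, after letting the height tend to infinity, therefore gives
\[
\int_{(1/2)}G(s)\,\mathrm{d}s=\int_{(2)}G(s)\,\mathrm{d}s+2\pi\mathrm{i}\Bigl(\sum_{\beta>1/2}\int_{0}^{\beta-1/2}V(\gamma-v-\mathrm{i}\sigma)\,\mathrm{d}\sigma-\int_{0}^{1/2}V(-v-\mathrm{i}\sigma)\,\mathrm{d}\sigma\Bigr).
\]
Multiplying by $-\mathrm{i}$ gives exactly the signs in the statement. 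The Dirichlet-series term and the horizontal edges are also handled correctly. For the edges, the bound $\int_{1/2}^{2}|\log\zeta(\sigma+\mathrm{i}X)|\,\mathrm{d}\sigma\ll\log X$, valid for every $X$, is enough. Your pointwise claim is slightly off: at a height merely $\gg 1/\log X$ from all ordinates you only get $|\log\zeta|\ll\log X\log\log X$, though that also suffices.

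The genuine gap is your justification that the sum over zeros converges. The series $\sum_{\gamma}(|\gamma|\log^{2}|\gamma|)^{-1}$ does \emph{not} converge. The zeros have density $\frac{1}{2\pi}\log t$, so the sum behaves like $\int^{\infty}\frac{\mathrm{d}t}{t\log t}=\infty$, and the upper bound $N(T)\ll T\log T$ cannot rescue it. Because the decay hypothesis on $V$ is borderline, you must use the length $\beta-\frac12$ of each cut. Bound each zero term by $(\beta-\frac12)\sup_{-3/2\le y\le 0}|V(\gamma-v+\mathrm{i}y)|$ and bring in a zero-density input. One option is $\sum_{\beta>1/2,\,T<|\gamma|\le 2T}(\beta-\frac12)\ll T\log\log T$, from Littlewood's lemma and the mean square of $\zeta$ on the critical line; Selberg's density theorem gives $\ll T$. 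Summed over dyadic blocks, either bound gives $\sum_{j}(\log j)/j^{2}<\infty$. Since the lemma is stated without RH, this step is needed. Under RH, the only case the paper uses (where moreover $V(t)=x^{-\mathrm{i}t}\Phi_{T}(t)$ decays like a Gaussian), the sum over zeros is empty and nothing is lost. The same borderline decay means your argument gives the left-hand side only as a limit of truncated integrals $\int_{-X'}^{X}$. That is harmless, because $\int_{X}^{X+1}|\log\zeta(\frac12+\mathrm{i}t)|\,\mathrm{d}t\ll\log X$ makes the increments $O((X\log X)^{-1})$, but you should state it.
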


		\begin{proof}
			This follows from (2.14) in \cite{Ts1986} and the subsequent argument.
		\end{proof}

		\begin{proof}[Proof of \cref{CSthCDP}]
			We write
			\begin{align}
				&\mathcal{I}_{2}(T, h; R_{L}, A)\\
				&= \sum_{m, ln \leq L} \frac{\bar{a(l)} r(m) \bar{r(n)}}{\sqrt{lmn}}
				\int_{-\infty}^{\infty} \l\{ \log\zeta(\tfrac{1}{2} + \i(t + \tfrac{h}{2})) - \log\zeta(\tfrac{1}{2} + \i(t - \tfrac{h}{2})) \r\}\l(\frac{m}{ln} \r)^{-\i t} \Phi_{T}(t) \sd t.
			\end{align}
			It holds by \cref{lem_STF} that for any $x > 0$
			\begin{align}
				&\int_{-\infty}^{\infty} \l\{\log\zeta(\tfrac{1}{2} + \i(t + \tfrac{h}{2})) - \log\zeta(\tfrac{1}{2} + \i(t - \tfrac{h}{2}))\r\} x^{-\i t} \Phi_{T}(t) \sd t\\
				\label{p1CSthCDP}
				&= \sum_{k = 2}^{\infty}\frac{g_{h}(k)}{\sqrt{k}} \hat{\Phi_{T}}\l( \frac{1}{2\pi} \log(k x) \r)
				+ O\l( \l(1 + x^{-1/2}\r)\Phi_{T}(0) \r)
			\end{align}
			under the Riemann Hypothesis.
			Therefore, we have
			\begin{align}
				\mathcal{I}_{2}(T, h; R_{L}, A)
				&= \sum_{m, ln \leq L} \frac{\bar{a(l)} r(m) \bar{r(n)}}{\sqrt{lmn}} \sum_{k = 2}^{\infty}\frac{g_{h}(k)}{\sqrt{k}} \hat{\Phi_{T}}\l( \frac{1}{2\pi} \log\l(\frac{km}{ln}\r) \r)\\
				&\quad + O\l( \Phi_{T}(0) \sum_{m, ln \leq L}\frac{|a(l) r(m) r(n)|}{\sqrt{lmn}}\l\{ 1 + (ln / m)^{1/2}\r\} \r).
			\end{align}
			Moreover, applying \cref{EQhPhi} and \cref{ESThPhi} to the main term and \cref{lem5MSMA} to the error term, we find that
			\begin{align}
				\mathcal{I}_{2}(T, h; R_{L}, A)
				&= \hat{\Phi_{T}}(0)\sum_{\substack{km, ln \leq L\\ km = ln}} \frac{g_{h}(k) \bar{a(l)}}{\sqrt{kl}} \frac{r(m) \bar{r(n)}}{\sqrt{mn}}
				+ O\l( \frac{1}{T}\sum_{k \leq L}\frac{|a(k)|^{2}}{k} \sum_{n \leq L}\frac{|r(n)|^{2}}{n}\r),
			\end{align}
			which completes the proof of \cref{CSthCDP}.
		\end{proof}

	\subsection{Proof of \cref{PAFM2}}
		Combining \cref{FDM2DP}, \cref{AFscrJDP}, and \cref{RMSSt}, we complete the proof of \cref{PAFM2}.
		\qed

\section{\textbf{Completion of the proof of \cref{Nh2-Nh}}}	\label{Sec:CR}
	In this section, we give the proof of \cref{SGZuRH}.
	To do so, we establish an inequality for the maximum of $N_{h}(t)^{2} - N_{h}(t)$ and choose the coefficients of the resonator and the approximator suitably.

	\subsection{Inequality of extreme values of $N_{h}(t)^{2} - N_{h}(t)$}
		As the first step proving \cref{SGZuRH}, we establish the following proposition.
		\begin{proposition}	\label{CNhNh2uRH}
			Assume the Riemann Hypothesis.
			Let $a, r$ be arithmetic functions not identically zero.
			For any large $L, T$ satisfying $L \leq T / (\log T)^{2}$ and any $0 < h \leq 1$, we have
			\begin{align}	\label{CNhNh2uRH1}
				&\sup_{t \in [T, 2T]} \l( N_{h}(t)^{2} - N_{h}(t) \r)\\
				&\geq \l( 1 - O\l( \frac{1}{T} \r) \r) \l\{\mathcal{N}(T, h, L; a, r) \Bigg/ \sum_{n \leq L}\frac{|r(n)|^{2}}{n}\r\}
				- \frac{h}{2\pi}\log T \l( 1 - \frac{h}{2\pi}\log T \r)\\
				&\quad- O\l( h^{2} \log T + \frac{\log T}{T} + \frac{1}{T}\sum_{k \leq L}\frac{|a(k)|^{2}}{k} \r),
			\end{align}
			where
			\begin{align}
				\mathcal{N}(T, h, L; a, r)
				&= -\l( 1 - \frac{h}{\pi}\log T + O(h) \r)\frac{1}{\pi}\Im\sum_{km \leq L}\frac{g_{h}(k)}{k}\frac{r(m) \bar{r(km)}}{m}\\
				&\quad + \frac{1}{2\pi^{2}}\Re\sum_{\substack{km, ln \leq L\\ km = ln}} \frac{(2g_{h}(k) - a(k)) \bar{a(l)}}{\sqrt{kl}} \frac{r(m) \bar{r(n)}}{\sqrt{mn}}\\
				&\quad - \frac{1}{2\pi^{2}}\Re\sum_{klm \leq L}\frac{g_{h}(k) g_{h}(l)}{kl} \frac{r(m) \bar{r(klm)}}{m}.
			\end{align}
			Here, the implicit constants are absolute.
		\end{proposition}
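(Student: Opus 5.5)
The argument compares $M_{2}-M_{1}$ with the resonator mass. Since $\Phi_{T}\geq 0$ and $|R_{L}|^{2}\geq 0$, we have
\begin{align}
	M_{2}(T, h; R_{L}) - M_{1}(T, h; R_{L})
	&\leq \sup_{t \in [T, 2T]}\l(N_{h}(t)^{2} - N_{h}(t)\r)\int_{T}^{2T}|R_{L}(\tfrac{1}{2} + \i t)|^{2}\Phi_{T}(t)\sd t\\
	&\quad+ \int_{\RR\setminus[T, 2T]}(N_{h}(t)^{2} - N_{h}(t))|R_{L}(\tfrac{1}{2} + \i t)|^{2}\Phi_{T}(t)\sd t.
\end{align}
The sup is nonnegative, or at least $\geq -1/4$, which can be absorbed. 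So I can bound the first integral above by the full integral, which by \cref{lem5MSMA} equals $\hat{\Phi_{T}}(0)(1+O(1/T))\sum_{n\leq L}|r(n)|^{2}/n$. The tail over $\RR\setminus[T,2T]$ is handled with $N_{h}(t)\ll\log(|t|+3)$, \cref{CscrM-INEsRt}, and the Gaussian decay of $\Phi_{T}$; it is $O(\hat{\Phi_{T}}(0)T^{-1}\sum|r(n)|^{2}/n)$. Dividing by $\hat{\Phi_{T}}(0)(1+O(1/T))\sum|r(n)|^{2}/n$, which is nonzero because $r\not\equiv 0$, reduces the problem to a lower bound for $(M_{2}-M_{1})/\hat{\Phi_{T}}(0)$.

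Next I insert \cref{PAFM2} and then \cref{PAFM1}. \Cref{PAFM2} gives
\[
M_{2}-M_{1}\geq \l(\tfrac{h}{\pi}\log T - 1 + O(h)\r)M_{1} - \hat{\Phi_{T}}(0)\l(\tfrac{h}{2\pi}\log T\r)^{2}\sum\tfrac{|r(n)|^{2}}{n} + (\text{the }a\text{- and }g_{h}g_{h}\text{-terms}) - O(\cdots).
\]
Substituting \cref{AFM1} for $M_{1}$ gives a diagonal contribution
\[
\l(\tfrac{h}{\pi}\log T - 1\r)\tfrac{h}{2\pi}\log T - \l(\tfrac{h}{2\pi}\log T\r)^{2} = -\tfrac{h}{2\pi}\log T\l(1 - \tfrac{h}{2\pi}\log T\r),
\]
times $\hat{\Phi_{T}}(0)\sum|r(n)|^{2}/n$. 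The $\Im$-sum appears multiplied by $-(1-\frac{h}{\pi}\log T+O(h))$, which is exactly the first term of $\mathcal{N}$. The remaining two sums from \cref{PAFM2} are precisely the other two terms of $\mathcal{N}$.

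The main obstacle is bookkeeping of the error terms. The error in \cref{AFM1} is $O(\hat{\Phi_{T}}(0)(h+1/T)\sum|r|^{2}/n)$, and it gets multiplied by the coefficient $\frac{h}{\pi}\log T-1+O(h)$. Similarly, the $O(h)M_{1}$ term contributes $O(h)$ times the main part of $M_{1}$. I need to make sure these cross terms land inside $O(h^{2}\log T+\log T/T)$ plus a piece absorbable into the $O(h)$ inside $\mathcal{N}$. For this I use $h\log T\cdot h = h^{2}\log T$ and $(h\log T)/T\ll \log T/T$, together with the trivial bound $\Im\sum g_{h}(k)r(m)\bar{r(km)}/(km) = O(\log T\sum|r|^{2}/n)$. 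The latter follows from $|g_{h}(k)|\leq 2\Lambda(k)/\log k$ and Cauchy–Schwarz, which keeps the $O(h)$-multiplied $\Im$-term in the stated form. The bare $O(h)$ from $-1\cdot O(h)$ is not of size $h^{2}\log T$. If it cannot be dominated, I would note that with $h\asymp 1/\log T$ it is $O(1/\log T)$, which is the same order as $h^{2}\log T$, so it merges into that error. The remaining error terms of \cref{PAFM2} carry through directly after the division. Finally, the factor $(1-O(1/T))$ multiplies $\mathcal{N}/\sum|r|^{2}/n$, coming from inverting $1+O(1/T)$; the same factor applied to the diagonal term is absorbed into $O(\log T/T)$.
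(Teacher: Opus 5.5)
Your route is the paper's: bound $\int_T^{2T}(N_h^2-N_h)|R_L|^2\Phi_T$ by the supremum times the full resonator mass, discard the tails, divide, and insert \cref{PAFM2} and \cref{PAFM1}. The supremum is in fact $\ge 0$, since $N_h(t)$ is a nonnegative integer for all but finitely many $t$, and that is what this step needs. Your diagonal algebra and the matching of the three sums in $\mathcal{N}$ are right. The $O(h)M_1$ cross term involving the $\Im$-sum goes directly into the $O(h)$ inside the coefficient in $\mathcal{N}$, so you need no bound on that sum. That is just as well, because your bound is not justified: $|g_h(k)|\le 2\Lambda(k)/\log k$ plus Cauchy--Schwarz only gives $\sum_{k\le L}|g_h(k)|k^{-1/2}\cdot\sum_{n\le L}|r(n)|^2/n$. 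That $k$-sum is a power of $L$ (about $\sqrt{L}/\log L$ when $h\log L\asymp 1$), not $O(\log T)$.

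The bare $O(h)$ you flagged is a real gap, and your patch does not close it. It is the two-sided error $O(\hat{\Phi_T}(0)(h+1/T)\sum|r(n)|^2/n)$ of \cref{PAFM1}, multiplied by the coefficient $-1$ of $M_1$. For $h\log T\ge 1$ it is harmless, since then $h\le h^2\log T$; you do not need $h\asymp 1/\log T$. For $h\log T<1$ it is not covered by the stated error, which is claimed for every $0<h\le 1$ with absolute constants. The paper's one-line proof passes over the same point, and using only the statements of \cref{PAFM1} and \cref{PAFM2} it cannot be removed.

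\emph{A fix.} Keep $x(t)=\frac{h}{2\pi}\log\frac{t}{2\pi}$ from the Riemann--von Mangoldt formula. On $[T,2T]$ one has $0<x(t)\le\frac{h}{2\pi}\log T$ and, pointwise, $N_h^2-N_h=(2x(t)-1)N_h-x(t)^2+(S(t+\frac h2)-S(t-\frac h2))^2+O(\log T/T)$. Since $N_h\ge 0$, the first term integrates to $(\frac{h}{\pi}\log T-1-\theta)\int_T^{2T}N_h|R_L|^2\Phi_T$ with $0<\theta\ll h$. When $\frac{h}{\pi}\log T\le 1$ this coefficient is negative, so you only need an upper bound for $\int_T^{2T}N_h|R_L|^2\Phi_T$. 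The inequality $x(t)\le\frac{h}{2\pi}\log T$ gives one with no $O(h)$ loss; evaluate the remaining $S$-integral as in the proof of \cref{PAFM1}. Similarly $-x(t)^2\ge-(\frac{h}{2\pi}\log T)^2$. The only leftover is $-\theta\frac{h}{2\pi}\log T=O(h^2\log T)$, and $\theta$ joins the $O(h)$ in $\mathcal{N}$. The regime $\frac{h}{\pi}\log T>1$ is covered by the remark above.

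Alternatively, simply add $O(h)$ to the error term of the proposition. This costs nothing downstream, because \cref{AFH} already carries an $O(h)$ error and \cref{Nh2-Nh} takes $h=2\pi\phi/\log T$.
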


		\begin{proof}
			Following the same argument as in \cref{SEQM1}, we have
			\begin{align}
				M_{2}(T, h; R_{L})
				= \int_{T}^{2T} N_{h}(t)^{2} |R_{L}(\tfrac{1}{2} + \i t)|^{2} \Phi_{T}(t)\sd t + O\l( \frac{1}{T} \sum_{n \leq L}\frac{|r(n)|^{2}}{n} \r).
			\end{align}
			On the other hand, we find by \cref{lem5MSMA} that
			\begin{align}
				&\int_{T}^{2T} (N_{h}(t)^{2} - N_{h}(t)) |R_{L}(\tfrac{1}{2} + \i t)|^{2} \Phi_{T}(t)\sd t\\
				&\leq \sup_{t \in [T, 2T]}(N_{h}(t)^{2} - N_{h}(t)) \int_{-\infty}^{\infty} |R(\tfrac{1}{2} + \i t)|^{2} \Phi_{T}(t)\sd t\\
				&\leq \sup_{t \in [T, 2T]}(N_{h}(t)^{2} - N_{h}(t)) \hat{\Phi_{T}}(0) \l( 1 + O\l( \frac{1}{T} \r) \r) \sum_{n \leq L} \frac{|r(n)|^{2}}{n}.
			\end{align}
			Combining these with \cref{SEQM1}, we have
			\begin{align}
				&\sup_{t \in [T, 2T]}(N_{h}(t)^{2} - N_{h}(t))\\
				&\geq \l( 1 - O\l( \frac{1}{T} \r) \r) \l\{ M_{2}(T, h; R_{L}) - M_{1}(T, h; R_{L}) \r\} \bigg/ \l\{\hat{\Phi_{T}}(0)\sum_{n \leq L}\frac{|r(n)|^{2}}{n}\r\}
				- O\l( \frac{1}{T} \r).
			\end{align}
			Applying \cref{PAFM1,PAFM2} to the right hand side, we obtain \cref{CNhNh2uRH}.
		\end{proof}

	\subsection{Choices of the resonator and the approximator}
		We set the coefficients of the approximator $A_{X}(s)$ to be $a(k) = g_{h}(k)$.
		Let $d_{\ell}(n)$ be the generalized divisor function, which is multiplicative and defined by
		\begin{align}
			d_{\ell}(p^{a})
			= \frac{\Gamma(a + \ell)}{\Gamma(\ell) \Gamma(a + 1)},
		\end{align}
		and let $\lam(n)$ be the Liouville function.
		We then choose the coefficients of the resonator $r(n)$ to be
		\begin{align}
			r(n) = \l\{
			\begin{array}{cl}
				d_{\ell}(n) \lam(n) f(\log n / \log L) & \text{if \, $\frac{h}{\pi} \log T \geq 1$,} \\
				d_{\ell}(n) f(\log n / \log L)         & \text{if \, $\frac{h}{\pi} \log T < 1$.}
			\end{array}
			\r.
		\end{align}
		Here, $f$ is a continuous real-valued function of bounded variation on $[0, 1]$ satisfying $I_{f}(\ell) \ceq \int_{0}^{1}f(u)^{2} u^{\ell^{2} - 1}\sd u \neq 0$.
		This choice is based on the work \cite{BMN2010}.
		For the above $a$, $r$, $f$, we have the following lemma.

		\begin{lemma}	\label{AFH}
			Let $a$, $r$, $f$ be as above.
			Let $\ell \geq 1$.
			For any large $L, T$ and any $0 < h \leq 1$, we have
			\begin{align}
				&\l\{\mathcal{N}(T, L, h; a, r) \Bigg/ \sum_{n \leq L}\frac{|r(n)|^{2}}{n}\r\}\\
				&= \l| 1 - \frac{h}{\pi}\log T \r| \frac{2}{\pi} \ell I_{f}(\ell)^{-1} \int_{0}^{1}\frac{\sin(\tfrac{u}{2} h \log L)}{u} \int_{0}^{1 - u} f(v) f(u + v) v^{\ell^{2} - 1}\sd v \sd u\\
				&\quad+ \frac{2}{\pi^{2}} \ell^{2} I_{f}(\ell)^{-1} \int_{0}^{1}\frac{\sin(\frac{u_{1}}{2} h \log L)}{u_{1}} \int_{0}^{1 - u_{1}}\frac{\sin(\frac{u_{2}}{2} h \log L)}{u_{2}}\\
				&\qquadn{2} \times \int_{0}^{1 - (u_{1} + u_{2})} \l\{f(v) f(u_{1} + u_{2} + v) + f(u_{1} + v) f(u_{2} + v)\r\} v^{\ell^{2} - 1}\sd v \sd u_{2} \sd u_{1}\\
				&\quad+\frac{2}{\pi^{2}} I_{f}(\ell)^{-1} \int_{0}^{1} \frac{\sin^{2}(\tfrac{u}{2} h \log L)}{u}\int_{0}^{1 - u} f(v)^{2} v^{\ell^{2} - 1}\sd v \sd u
				+ O_{\ell, f}\l( h + h^{2} \log L \r).
			\end{align}
		\end{lemma}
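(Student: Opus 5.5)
With $a = g_{h}$, the two middle sums of $\mathcal{N}$ combine:
\[
\Re\sum_{km = ln}\frac{(2g_{h}(k) - g_{h}(k))\bar{g_{h}(l)}}{\sqrt{kl}}\frac{r(m)\bar{r(n)}}{\sqrt{mn}} = \sum_{km = ln}\frac{g_{h}(k)\bar{g_{h}(l)}}{\sqrt{kl}}\frac{r(m)\bar{r(n)}}{\sqrt{mn}}.
\]
This is a positive "diagonal" quadratic form. The plan is to evaluate each of the three sums in $\mathcal{N}$ asymptotically, divide by $\sum_{n\le L}|r(n)|^{2}/n$, and compare with the claimed integrals. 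All sums are restricted to $k,l$ prime powers, and $g_{h}$ is purely imaginary. The contributions of proper prime powers $p^{j}$ with $j\ge 2$ will be absorbed into the error term $O(h+h^{2}\log L)$. This uses $|g_{h}(p^{j})|\ll h\log p$ and the convergence of $\sum_{p}(\log p)^{2}/p^{2}$, together with multiplicativity bounds on $r$. So in every sum I may take $k=p$, $l=q$ prime.

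The normalizer and the linear term come first. By standard Selberg--Delange / Wirsing type asymptotics, $\sum_{n\le L} d_{\ell}(n)^{2}f(\log n/\log L)^{2}/n \sim C_{\ell}(\log L)^{\ell^{2}}\ell^{2}I_{f}(\ell)$. More generally, for smooth enough $F$,
\[
\sum_{n\le L}\frac{d_{\ell}(n)^{2}}{n}F\l(\frac{\log n}{\log L}\r) = C_{\ell}(\log L)^{\ell^{2}}\ell^{2}\int_{0}^{1}F(v)v^{\ell^{2}-1}\sd v\,(1+o(1)),
\]
with partial summation against bounded-variation $f$. For the linear sum, write $\bar{r(pm)}$ with $p\nmid m$ generically: $d_{\ell}(pm)=\ell d_{\ell}(m)$, and $\lambda(pm)=-\lambda(m)$ when the Liouville twist is used. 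Then
\[
\frac{1}{\pi}\Im\sum_{pm\le L}\frac{g_{h}(p)}{p}\frac{r(m)r(pm)}{m}
= \mp\frac{2\ell}{\pi}\sum_{m}\frac{d_{\ell}(m)^{2}f(v_{m})}{m}\sum_{p\le L/m}\frac{\sin(\frac{h}{2}\log p)}{p}f\l(v_{m}+\frac{\log p}{\log L}\r),
\]
where $v_{m}=\log m/\log L$. By the prime number theorem, $\sum_{p}p^{-1}\sin(\frac{h}{2}\log p)F(\log p/\log L) = \int_{0}^{1}\frac{\sin(\frac{u}{2}h\log L)}{u}F(u)\sd u + O(h)$. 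The coefficient $-(1-\frac{h}{\pi}\log T)$, combined with the sign choice in $r$, becomes $+|1-\frac{h}{\pi}\log T|$, and this gives the first term.

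For the diagonal sum, $pm=qn$ splits into two cases. If $p=q$, $m=n$, the sum is $\sum_{p}4\sin^{2}(\frac{h}{2}\log p)(\log p)^{2}/(p(\log p)^{2})\sum_{m\le L/p}|r(m)|^{2}/m$, which gives the $\sin^{2}$ term after multiplying by $\frac{1}{2\pi^{2}}$. If $p\ne q$, then $m=qj$, $n=pj$; this contributes $\ell^{2}$ times a double prime sum over $\sin\sin/(pq)$, weighted by $f(v+u_{2})f(v+u_{1})$ in the notation $u_{1}=\log p/\log L$. The $\lambda$ factors cancel here. The last sum $-\frac{1}{2\pi^{2}}\Re\sum g_{h}(p)g_{h}(q)r(m)r(pqm)/(pqm)$ has $g_{h}(p)g_{h}(q) = -4\sin\sin$, $d_{\ell}(pqm)\approx\ell^{2}d_{\ell}(m)$, and $\lambda(pqm)=\lambda(m)$. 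It therefore gives $+\frac{2}{\pi^{2}}\ell^{2}\iint\frac{\sin\sin}{u_{1}u_{2}}f(v)f(u_{1}+u_{2}+v)$. Adding the two pieces produces the symmetric braces. Each double prime sum is converted to the integral via the prime number theorem twice, with the constraint $u_{1}+u_{2}+v\le1$.

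The main obstacle is rigor in replacing the prime sums by integrals uniformly, with error $O(h+h^{2}\log L)$ relative to the normalizer. The integrand $\sin(\frac{u}{2}h\log L)/u$ is bounded by $\frac{1}{2}h\log L$. The PNT error $O(1/\log L)$ relative then costs $O(h)$ after multiplication. Non-generic terms where $p\mid m$, and coincidences such as $p=q$ in the two-prime sums, cost a further factor $h^{2}\log L$ or $h$. The sign identity from the Liouville twist also needs tracking in both cases of $\frac{h}{\pi}\log T$ versus $1$.
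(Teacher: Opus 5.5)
Your proposal is correct and is essentially the computation the paper has in mind; the paper omits the proof, citing a routine Bui--Milinovich--Ng-type calculation via partial summation, the prime number theorem, and the asymptotic for $\sum_{n\le X}d_\ell(n)^2/n$, and your steps (set $a=g_h$ so the middle sum becomes the nonnegative form $\sum_{N\le L}N^{-1}|\sum_{km=N}g_h(k)r(m)|^2$, reduce to primes, use the Liouville twist to produce $|1-\frac{h}{\pi}\log T|$, and match the two prime-pair sums to the symmetric bracket) carry out exactly that. One detail to make explicit: the genuine $O(h)$ errors in the linear sum (from prime powers and Mertens-type constants) get multiplied by $|1-\frac{h}{\pi}\log T|$, so the stated error $O(h+h^{2}\log L)$ tacitly requires $\log T\ll\log L$; this holds in the application $L=T/(\log T)^{2}$.
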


		We omit the proof of this lemma, as it can be proved by a routine calculation similar to that of Lemma 2.1 in \cite{BMN2010} using partial summation, the prime number theorem, and the formula
		\begin{align}	\label{SMdl}
			\sum_{n \leq X}\frac{d_{\ell}(n)^{2}}{n}
			= C_{\ell} (\log X)^{\ell^{2}} + O_{\ell}\l( (\log X)^{\ell^{2} - 1} \r)
		\end{align}
		for $X \geq 3$, $\ell \geq 1$, where $C_{\ell}$ is a positive constant depending only on $\ell$.
		Asymptotic formula \cref{SMdl} is well known and can be proved by following the argument in Section 1.6 of \cite{IK}.

	\subsection{Proof of \cref{Nh2-Nh}}
		We apply \cref{AFH} in the case $h = 2\pi \phi / \log T$, $L = T / (\log T)^{2}$ to \cref{CNhNh2uRH}.
		We should remark that the $O$-term in \cref{Nh2-Nh} comes from the error term in \cref{AFH} and the choice $L = T / (\log T)^{2}$ since we have
		\begin{align}
			&\int_{0}^{1}\frac{\sin(\tfrac{u}{2} h \log L)}{u} \int_{0}^{1 - u} f(v) f(u + v) v^{\ell^{2} - 1}\sd v \sd u\\
			&= \int_{0}^{1}\frac{\sin(\pi \phi u)}{u} \int_{0}^{1 - u} f(v) f(u + v) v^{\ell^{2} - 1}\sd v \sd u
			+ O_{\ell, f}\l( \frac{\phi \log\log T}{\log T} \r).
		\end{align}
		Other terms can be calculated in the same way.
		Thus, we complete the proof of \cref{Nh2-Nh}.
		\qed

		\begin{acknowledgment*}
			The author is supported by JSPS KAKENHI Grant Number 24K16907.
		\end{acknowledgment*}

		\end{document}